\renewcommand\thepart{\Roman{part}}
\def\id{\mathrm{id}}
\def\rank{\mathrm{rank}}
\def\part#1{%
  \vskip .02\vsize 
  \refstepcounter{part}
  \addcontentsline{toc}{part}{Part \thepart:\ #1}
  {\centering\large \textbf{Part \thepart}. #1\par}%
  \vskip .01\vsize
}
\newcommand{\Hom}{\operatorname{Hom}}
\newcommand{\vol}{\operatorname{vol}}
\newcommand*{\sheafHom}{\mathscr{H}\negthinspace om}
\theoremstyle{plain}
\newtheorem{theorem}{Theorem}[section]
\newtheorem*{theorem*}{Theorem}
\newtheorem{lemma}[theorem]{Lemma}
\newtheorem*{conjecture*}{Conjecture}
\newtheorem{corollary}[theorem]{Corollary}
\newtheorem*{corollary*}{Corollary}
\newtheorem{proposition}[theorem]{Proposition}
\theoremstyle{definition}
\newtheorem*{definition*}{Definition}
\numberwithin{equation}{section} \setcounter{tocdepth}{1}
\def\ifdraft{\ifdim\overfullrule>\z@
  \expandafter\@firstoftwo\else\expandafter\@secondoftwo\fi}
\newsavebox{\ieeealgbox}
\newcommand{\Q}{\mathbb{Q}}
\newcommand{\Z}{\mathbb{Z}}
\def\hq{\hspace{-0.5mm}/\hspace{-0.14cm}/ \hspace{-0.5mm}}
\theoremstyle{definition}
\newtheorem{definition}[theorem]{Definition}
\newtheorem{remark}[theorem]{Remark}
\newtheorem*{remark*}{Remark}
\newtheorem{comment2}[theorem]{Comment}
\title[A Master Space for Moduli Spaces of Sheaves]{A Master Space for Moduli Spaces \\ of Gieseker-Stable Sheaves}
\date{\today}
\author[D.~Greb]{Daniel Greb}
  \address{DG: Essener Seminar f\"ur Algebraische Geometrie und Arithmetik, Fakult\"at f\"ur Mathematik, Universit\"at Duisburg--Essen, 45117 Essen, Germany}
  \email{daniel.greb@uni-due.de}
\author[J.~Ross]{Julius Ross}
  \address{JR: Department of Pure Mathematics and Mathematical Statistics, University of Cambridge, Wilberforce Road, Cambridge, CB3 0WB, UK}
  \email{j.ross@dpmms.cam.ac.uk}
\author[M.~Toma]{Matei Toma}
\address{MT: Institut de Math\'ematiques \'Elie Cartan, Universit\'e de Lorraine, B.P. 70239, \newline 54506 Vandoeuvre-l\`es-Nancy Cedex,
France}
\email{matei.toma@univ-lorraine.fr}
\subjclass[2010]{14D20, 14J60, 32G13; 14L24, 16G20.}
\keywords{Gieseker stability, variation of moduli spaces, chamber structures, boundedness, moduli of quiver representations.} 
\thanks{}
\begin{document}

\begin{abstract}
We consider a notion of stability for sheaves, which we call \emph{multi-Gieseker stability}, that depends on several ample polarisations $L_1, \dots, L_N$ and on an additional parameter $\sigma \in \mathbb{Q}_{\geq 0}^N\setminus\{0\}$. The set of semi stable sheaves admits a projective moduli space $\mathcal M_{\sigma}$.  We prove that given a finite collection of parameters $\sigma$, there exists a sheaf- and representation-theoretically defined master space $Y$ such that each corresponding moduli space is obtained from $Y$ as a Geometric Invariant Theory (GIT) quotient.  In particular, any two such spaces are related by a finite number of ``Thaddeus-flips".  As a corollary, we deduce that any two Gieseker-moduli space of sheaves (with respect to different polarisations $L_1$ and $L_2$) are related via a GIT-master space. This confirms an old expectation and generalises results from the surface case to arbitrary dimension.
\end{abstract}
\thanks{During the preparation of this paper, DG was partially supported by the DFG-Collaborative Research Center SFB/TR 45 ''Periods, moduli spaces and arithmetic of algebraic varieties``. Moreover, he wants to thank Sidney Sussex College, Cambridge, for hospitality and perfect working conditions during a research visit in October 2014. JR is supported by an EPSRC Career Acceleration Fellowship (EP/J002062/1).}
\maketitle


\newcommand{\Space}{\,\,\,\,\,\,\,\,\,\,\,\,\,\,\,\,}

\addtocontents{toc}{\protect\setcounter{tocdepth}{1}}

\addtocontents{toc}{\protect\setcounter{tocdepth}{1}}

This paper continues previous work \cite{GRTI} of the authors in which, building on work of \'Alvarez-C\'onsul and King \cite{ConsulKing}, we re-examine the construction of the moduli space $\mathcal M_L$ of Gieseker-semistable sheaves on a given projective scheme $X$, with an interest in how these spaces change as the polarisation $L$ varies. As a main result, we will prove that for any choice of ample line bundles $L_0$, $L_1$ on $X$, there exists a sheaf- and representation-theoretically defined ``master space'' $Y$ with the property that the moduli spaces $\mathcal M_{L_0}$ and $\mathcal M_{L_1}$ can be obtained from $Y$ as a Geometric Invariant Theory (GIT) quotient with respect to different stability parameters.  In particular, any two such spaces fall into the Variation of Geometric Invariant Theory (VGIT) framework considered by Thaddeus \cite{Thaddeus} and Dolgachev-Hu \cite{DolgachevHu}.

The variation question for moduli of sheaves has attracted quite some attention in the past, and a precise picture has emerged in the case of surfaces (see for example \cite{MatsukiWentworth}), which has recently also been reconsidered from the point of view of Bridgeland stability conditions \cite{BertramMartinez}. However, while a general master space result has been expected by many experts, the approaches used in the surface case fail, and the higher-dimensional situation until recently appeared to be quite mysterious; cf.~the discussion of irrational walls in the introduction of \cite{Schmitt}.

Before stating our results precisely, it is worth commenting on why such a statement does not follow immediately from Simpson's GIT-construction of $\mathcal M_L$ given in \cite{Simpson}.   To do so, we first recall this construction in the case of a single ample line bundle $L_0$, which starts by finding a space with a group action, whose orbits correspond to isomorphism classes of semistable sheaves.  To this end one argues as follows: the family of sheaves of a given topological type that are Gieseker-semistable with respect to $L_0$ is bounded, and thus for $n$ sufficiently large, the evaluation map realises every such sheaf $E$ as a quotient
$$H^0(E\otimes L_0^n) \otimes L_0^{-n} \to E\to 0.$$
Picking an identification between $H^0(E\otimes L_0^n)$ and a fixed vector space of the appropriate dimension, i.e., picking a basis in $H^0(E\otimes L_0^n)$, we can thus think of $E$ as a point in a certain Quot scheme $Q_{L_0}$.   This space has a group action, coming from the possible choices of basis, and the tools of GIT are then applied to an appropriate subscheme $R_{L_0}\subset Q_{L_0}$ to construct the desired moduli space $\mathcal M_{L_0}$.  Doing so requires a choice of linearisation for the group action on $R_{L_0}$, and it is a theorem \cite[Theorem 4.3.3]{Bible} that, for $n$ sufficiently large,  there is such a choice for which GIT-stability agrees with Gieseker-stability.

From this description, it is clear that there is a problem in using VGIT to relate two moduli spaces $\mathcal{M}_{L_0}$ and $\mathcal{M}_{L_1}$, as the moduli space $\mathcal M_{L_1}$ is constructed as the quotient \emph{of a different space} $R_{L_1}$ and not from the space $R_{L_0}$.  It would be nice if one could also produce $\mathcal M_{L_1}$ as a quotient of $R_{L_0}$ with respect to some other linearisation,  but it is not clear whether this is in fact possible.

Our approach is to consider a new kind of stability of sheaves that is a convex combination of Gieseker-stability with respect to different polarisations.  Precisely, let $X$ be a projective manifold and fix a finite collection of ample line bundles $L_j$ on $X$ for $1\le j\le j_0$. Furthermore, suppose that $\sigma=(\sigma_1,\ldots,\sigma_{j_0})$ is a non-zero vector of non-negative rational numbers.  We shall say a torsion-free coherent sheaf $E$ on $X$ is \emph{multi-Gieseker-semistable} (or simply \emph{semistable}) with respect to this data if for all non-trivial proper subsheaves $F\subset E$ the inequality 
\begin{equation*}
 \frac{\sum_{j} \sigma_j \chi(F\otimes L_{j}^m)}{\rank(F)} \le  \frac{\sum_{j} \sigma_j \chi(E\otimes L_{j}^m)}{\rank(E)}\label{eq:introstability}
 \end{equation*}
holds for all $m$ sufficiently large. On the one hand, note that by setting all but one of the components of $\sigma$ to zero we recover the classical notion of Gieseker-semistability. Hence, the notion of multi-Gieseker-stability formally allows us to interpolate between Gieseker-stability with respect to several polarisations. On the other hand, in \cite{GRTI} we prove, under a boundedness hypothesis that we show to hold in many natural setups, that there is a projective moduli space $\mathcal M_{\sigma}$ of sheaves of a given topological type that are semistable with respect to $\sigma$. Analogous to the classical case, this moduli space parametrises $S$-equivalence classes of semistable sheaves.  

The goal of the present paper is to show that any two such spaces can be constructed as a GIT-quotient from the same master space.  In the following, suppose $L_1,\ldots,L_{j_0}$ are a fixed collection of ample line bundles on $X$ and that $\sigma^{{(i)}} = (\sigma_1^{(i)},\ldots,\sigma_{j_0}^{(i)})$ for $i=1,\ldots, i_0$ is a finite collection of non-zero vectors in $\mathbb Q^{j_0}_{\ge 0}$.  We work always under the assumption that the set of sheaves of the given topological type that are semistable with respect to $\sigma^{(i)}$ for some $i$ is bounded. We refer the reader to Section \ref{sec:boundedness} for a brief discussion of some related boundedness results that are strong enough for most applications, including the variation question for Gieseker-moduli spaces considered above.

\begin{theorem*}[Master-Space Construction, Theorem \ref{thm:master}]
 There exists an affine master space $Y$ with an action by a product $G$ of general linear groups such that the following holds: for any $i=1,\ldots,i_0$ the moduli space $\mathcal M_{\sigma^{(i)}}$ is given as the GIT-quotient
$$\mathcal M_{\sigma^{(i)}} = Y \hq _{\theta^{(i)}} G,$$
where $\theta^{(i)}$ is certain character of $G$ used to define GIT-stability.
\end{theorem*}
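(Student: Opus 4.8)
The plan is to follow the strategy of Álvarez-Cónsul–King, realising all the moduli spaces $\mathcal M_{\sigma^{(i)}}$ as moduli spaces of representations of a single quiver, and then applying the representation-theoretic machinery of King to obtain the master space. The first and key step is a \emph{simultaneous embedding}: by the boundedness hypothesis, there is a uniform bound on the family of sheaves that are semistable with respect to some $\sigma^{(i)}$, so we may choose a single pair of integers $n \ll m$ (with $m$ much larger than $n$) such that for every such sheaf $E$ and every $1 \le j \le j_0$, the twists $E \otimes L_j^n$ and $E \otimes L_j^m$ are globally generated with vanishing higher cohomology, and the multiplication maps $H^0(E \otimes L_j^n) \otimes H^0(L_j^{m-n}) \to H^0(E \otimes L_j^m)$ carry enough information to recover $E$. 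Fixing vector spaces $V_j$ and $W_j$ of the appropriate (topologically determined) dimensions, this data defines a point in the representation space $\prod_j \Hom(V_j \otimes H^0(L_j^{m-n}), W_j)$ of a Kronecker-type quiver $\mathcal Q$ with $2 j_0$ vertices, equipped with the natural action of $G = \prod_j \bigl(GL(V_j) \times GL(W_j)\bigr)$.

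The second step is to identify the right locus inside this representation space. One defines a closed subscheme — the analogue of the $\bar{\mathcal Q}$-representations satisfying the commutativity/compatibility relations encoding that all the $V_j$ come from a single sheaf $E$ via its various twists — and intersects with the locally closed locus where the maps behave as for an actual semistable sheaf. Call the resulting $G$-scheme (or a suitable affine cone thereover) $Y$; taking $Y$ affine is arranged by passing to the total space of an appropriate line bundle, as in GIT cone constructions, so that characters of $G$ give linearisations. The third step is the \emph{dictionary between stabilities}: one shows that for each $i$, there is a character $\theta^{(i)}$ of $G$, built linearly out of the weights $\sigma^{(i)}_j$ together with the Hilbert-polynomial data at levels $n$ and $m$, such that $\theta^{(i)}$-(semi)stability of a point of $Y$ in the sense of King's criterion (no subrepresentation has negative $\theta^{(i)}$-slope) coincides with multi-Gieseker-(semi)stability of the associated sheaf with respect to $\sigma^{(i)}$. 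This is the multi-polarisation refinement of the classical comparison \cite[Theorem 4.3.3]{Bible}, and one invokes the results of \cite{GRTI} that this comparison holds for $n, m$ sufficiently large. Finally, one checks that the GIT quotient $Y /\!\!/_{\theta^{(i)}} G$ indeed corepresents the moduli functor — i.e. equals $\mathcal M_{\sigma^{(i)}}$ — by matching $S$-equivalence classes: orbit closures in $Y$ for the $\theta^{(i)}$-semistable locus correspond to Jordan–Hölder filtrations of sheaves with respect to $\sigma^{(i)}$, so the quotient parametrises exactly the $S$-equivalence classes, as established in \cite{GRTI}.

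The main obstacle, and the crux of the whole argument, is the \emph{uniformity} required in the first step: one needs a \emph{single} choice of $(n,m)$ that works simultaneously for all the finitely many parameters $\sigma^{(i)}$ and yields the stability comparison for \emph{each} of them at once. Boundedness of the union of the semistable families gives uniform control of global generation and cohomology vanishing, but the delicate point is that the comparison of GIT-stability with multi-Gieseker-stability in \cite{GRTI} requires $m$ large relative to $n$ in a way that a priori could depend on the slope function, hence on $\sigma^{(i)}$. Since there are only finitely many $\sigma^{(i)}$, one takes the maximum of the finitely many thresholds; the content is in checking that the relevant estimates in \cite{GRTI} are genuinely uniform over the bounded family and depend on $\sigma^{(i)}$ only through finitely many numerical invariants, so that such a maximum is legitimate. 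Once this uniform $(n,m)$ is in hand, the remaining steps are the construction of $\mathcal Q$, $Y$, and the $\theta^{(i)}$, together with bookkeeping to match GIT-quotients with moduli functors, all of which proceed exactly as in the single-polarisation theory of \cite{GRTI} applied vertex-by-vertex.
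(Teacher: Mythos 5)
Your proposal misidentifies the crux. The uniform choice of $(n,m)$ (and $p$) across the finitely many parameters is routine: $\Sigma$ is finite and the union of the semistable families is bounded by hypothesis, so one simply takes the maximum of finitely many thresholds, exactly as the paper does in one sentence at the start of Section \ref{sect:master_construction}. The genuine difficulty, which your outline does not address, is that the parameters $\sigma^{(i)}$ needed for the application (e.g.\ $(1,0)$ and $(0,1)$ in Corollary \ref{cor:master}) are \emph{degenerate}, and for degenerate $\sigma$ the machinery of \cite{GRTI} that you invoke is not available: the comparison of Jordan--H\"older filtrations and $S$-equivalence (Theorem \ref{thm:mainsemistabilitycomparison}(2)) and the existence of the good quotient with its modular interpretation (Theorem \ref{thm:moduliexist}) are proved there only for \emph{positive} parameters. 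Indeed, for degenerate $\sigma$ the locus $Q^{[\sigma\text{-ss}]}$ is not saturated in $R^{\sigma\text{-ss}}$ and quiver-theoretic JH factors need not come from sheaves, so your final step ``orbit closures correspond to Jordan--H\"older filtrations \dots as established in \cite{GRTI}'' is precisely the step that fails. The paper's proof spends essentially all of its effort here: it truncates $\sigma$ to a positive parameter $\sigma'$ on the subquiver $\mathcal{Q}'$, quotients in stages by $G=G'\times G''$, and proves that the induced map $\hat\pi\colon Z^{\sigma\text{-ss}}\hq G''\to Z'^{\,\sigma'\text{-ss}}$ is a $G'$-equivariant isomorphism (Proposition \ref{prop:maintechinical}), whence $\mathcal M_\sigma\cong Z^{\sigma\text{-ss}}\hq G$ (Corollary \ref{cor:obtaining}). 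Your proposal supplies no substitute for this argument.

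A second omission concerns the master space itself. Since $Y$ must contain the loci for all parameters simultaneously (in the paper, $Y=\bigcup_{\sigma\in\Sigma}\overline{Q^{[\sigma\text{-ss}]}}\subset R$), one must show that at the linearisation $\theta_{\sigma}$ the components coming from the \emph{other} parameters contribute nothing, i.e.\ $Y^{\sigma\text{-ss}}=Z^{\sigma\text{-ss}}$; this rests on the scheme-theoretic inclusion $Q^{[\sigma'\text{-ss}]}\cap R^{\sigma\text{-ss}}\subset Q^{[\sigma\text{-ss}]}$, proved via Theorem \ref{thm:mainsemistabilitycomparison}(1) together with the universal property in Proposition \ref{prop:identifyingtheimage}(b). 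Your outline never confronts this cross-parameter interference. Finally, a minor point: the ``affine cone over a line bundle'' device is unnecessary, since $R$ is an affine space and $Y$ is a closed subscheme of it, so King's character-linearised GIT on affine schemes applies directly.
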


Thus, we see that the $\mathcal M_{\sigma^{(i)}}$ are all related by so-called \emph{Thaddeus-flips} that arise from VGIT, see \cite{Thaddeus}. In fact, the master space $Y$ itself has a natural sheaf-theoretic meaning and connects the moduli problems considered to the representation theory of a certain quiver. For more details on this modular interpretation of $Y$ the reader is referred to Sections~\ref{sec:quiver}, \ref{subsect:sheaves_into_representations}, and \ref{sect:master_construction}. By setting $j_0=2$ and considering two special parameters $\sigma$, the preceding result immediately implies the following:

\begin{corollary*}[Mumford-Thaddeus principle for Gieseker-moduli spaces, Corollary~\ref{cor:master}]
Let $L_0$ and $L_1$ be ample line bundles on $X$.  Then, the moduli spaces $\mathcal M_{L_0}$ and $\mathcal M_{L_1}$ of sheaves of a given topological type that are Gieseker-semistable with respect to $L_0$ and $L_1$, respectively, are related by a finite number of Thaddeus-flips.
\end{corollary*}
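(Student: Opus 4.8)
The plan is to obtain the Corollary by specialising the Master-Space Construction to a two-parameter situation and then appealing to the general variation-of-GIT formalism. First I would apply the theorem with $j_0=2$, taking the two fixed ample line bundles to be the pair $L_0, L_1$ occurring in the Corollary, with $i_0=2$, and with the two stability parameters chosen to be the standard basis vectors $\sigma^{(1)}=(1,0)$ and $\sigma^{(2)}=(0,1)$ of $\mathbb{Q}^2_{\ge 0}\setminus\{0\}$. As already observed in the introduction, for $\sigma^{(1)}=(1,0)$ the defining inequality of multi-Gieseker-semistability reduces to $\chi(F\otimes L_0^m)/\rank(F)\le\chi(E\otimes L_0^m)/\rank(E)$ for $m\gg 0$, that is, to classical Gieseker-semistability with respect to $L_0$; likewise $\sigma^{(2)}=(0,1)$ yields classical Gieseker-semistability with respect to $L_1$. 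Since in each case the reduced polynomial governing stability, the Jordan--H\"older filtrations, and hence the resulting notion of $S$-equivalence are literally the classical ones, the moduli spaces $\mathcal{M}_{\sigma^{(1)}}$ and $\mathcal{M}_{\sigma^{(2)}}$ coincide, as projective schemes, with the classical Gieseker-moduli spaces $\mathcal{M}_{L_0}$ and $\mathcal{M}_{L_1}$ of the given topological type.

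Next one must verify the standing boundedness hypothesis of the theorem for this choice of data: the set of sheaves of the fixed topological type that are semistable with respect to $\sigma^{(1)}$ or with respect to $\sigma^{(2)}$ must be bounded. By the previous paragraph this set is the union of the family of $L_0$-Gieseker-semistable sheaves and the family of $L_1$-Gieseker-semistable sheaves (of the prescribed class); each of these is bounded by the classical boundedness results recalled in Section~\ref{sec:boundedness}, and a finite union of bounded families is again bounded. Hence the Master-Space Construction applies and produces an affine scheme $Y$ of finite type with an action of a product $G$ of general linear groups, together with characters $\theta^{(1)},\theta^{(2)}$ of $G$ for which $\mathcal{M}_{L_0}=Y\hq_{\theta^{(1)}}G$ and $\mathcal{M}_{L_1}=Y\hq_{\theta^{(2)}}G$.

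It then remains to invoke the variation-of-GIT machinery. Both moduli spaces are now realised as GIT quotients of one and the same affine scheme of finite type $Y$ by one and the same reductive group $G$, differing only in the linearising character. Joining $\theta^{(1)}$ to $\theta^{(2)}$ by a line segment inside the cone of effective characters in $\operatorname{Hom}(G,\mathbb{G}_m)\otimes\mathbb{Q}$ (which is legitimate by convexity, assuming both quotients are non-empty), and using the wall-and-chamber structure on that cone furnished by the theory of Thaddeus~\cite{Thaddeus} and Dolgachev--Hu~\cite{DolgachevHu} in its incarnation for affine varieties with a character linearisation, one sees that the segment meets only finitely many walls and that crossing a wall replaces the current quotient by the next one through a Thaddeus-flip; concatenating these finitely many flips relates $\mathcal{M}_{L_0}$ to $\mathcal{M}_{L_1}$. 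The entire weight of the argument sits in the Master-Space Construction itself; the only points in the deduction that genuinely require care are the identification of the boundary parameters $\sigma^{(i)}=e_i$ with classical Gieseker-stability \emph{and} of the quotients $Y\hq_{\theta^{(i)}}G$ with the classical moduli spaces as schemes (so that the VGIT picture really relates $\mathcal{M}_{L_0}$ and $\mathcal{M}_{L_1}$ and not merely auxiliary spaces), and the finiteness of the number of walls crossed, which relies on $Y$ being of finite type.
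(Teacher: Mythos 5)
Your proposal is correct and follows essentially the same route as the paper: specialise the master-space theorem to $j_0=2$ with the degenerate parameters $\sigma^{(1)}=(1,0)$ and $\sigma^{(2)}=(0,1)$, identify these with classical Gieseker-semistability for $L_0$ and $L_1$, check the boundedness hypothesis via classical results (cf.\ \cite[Theorem~3.3.7]{Bible}), and conclude by VGIT. The only detail the paper adds that you omit is the harmless reduction ``without loss of generality $L_0$ and $L_1$ are very ample'' (replace each $L_i$ by a suitable power, which does not change Gieseker-semistability), needed because the quiver construction is set up for very ample line bundles.
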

Again, this is only a weak formulation of the outcome of our investigation, and the master space used is closely connected to the collection of moduli problems at hand.

It is worth emphasising that the above theorem was proved in \cite{GRTI} under the additional assumption that all the $\sigma^{(i)}_j$ are strictly positive.  This suffices to conclude the above corollary under the additional assumption that we are dealing with torsion-free sheaves on a smooth threefold, see \cite[Theorem~12.1]{GRTI}, or that $L_0$ and $L_1$ are ``general" polarisations in a suitable sense, see \cite[Theorem~4.4]{GRTIIpreprint}, although the argument for these results is somewhat indirect. Thus, the real purpose of this paper is to address the degenerate case when some of the $\sigma^{(i)}_j$ vanish, thus implementing the naive idea behind the introduction of the notion of multi-Gieseker-stability and allowing us to conclude the corollary in the stated generality.  We also emphasise that the assumption that $X$ be smooth and that $E$ be torsion-free is not necessary, and will be relaxed below.

In \cite{GRTIIpreprint}, we developed a general approach in order to realise the intermediate spaces that appear in a VGIT-passage from one Gieseker-moduli space to another as moduli of sheaves, and executed it in dimension three. In principle, given the main theorem of the present paper, this approach could be pushed through also in higher dimensions; however, the required Riemann-Roch computations seem to be growing in complexity rather quickly. For more information, the reader is refereed to Section \ref{sec:uniform} as well as to said paper \cite{GRTIIpreprint}. It remains an interesting open problem to derive analogous more refined results concerning the intermediate steps in the VGIT-passage even for special base manifolds $X$, low rank, and special choice of Chern-classes.

\subsubsection*{Global Assumption}
We always work over an algebraically closed field $k$ of characteristic zero.

\section{Preliminaries on multi-Gieseker stability and quiver representations}\label{sec:prelim}

For convenience of the reader, and to make this paper reasonably self-contained, we recall some preliminaries and definitions we need from \cite{GRTI} and sketch the construction of the moduli space of multi-Gieseker-stable sheaves.

\subsection{Geometric Invariant Theory} For standard concepts from Geometric Invariant Theory the reader is referred to \cite{MumfordGIT} as well as \cite{SchmittBook}.  When a group $G$ acts on a scheme $Y$ we shall write $Y\hq G$ for the good quotient (when it exists).  We shall use the following result (which is \cite[Exercise 1.5.3.3]{SchmittBook}) concerning quotients by product group actions. 

\begin{lemma}\label{lem:exerciseschmitt}
Suppose $G'$ and $G''$ are reductive linear algebraic groups.  Let $G'\times G''$ act on a scheme $Z$ of finite type over $k$ and suppose this action admits a good quotient $Z\hq (G'\times G'')$.  Then, the induced action of $G''$ on $Z$ admits a good quotient $Z\hq G''$, which is endowed with an induced $G'$-action, and $Z\hq (G'\times G'')$ is a good quotient of $Z\hq G''$ by $G'$; i.e., we obtain an isomorphism
$$ Z\hq (G'\times G'') \cong (Z\hq G'') \hq G'.$$
\end{lemma}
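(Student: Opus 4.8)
The plan is to reduce everything to the classical affine situation, using that a good quotient is by definition an affine morphism equipped with a sheaf-theoretic identification of invariant functions, and that \emph{all} the defining properties of a good quotient are local on the target. Write $\pi\colon Z\to Y:=Z\hq(G'\times G'')$ for the given good quotient and put $\mathcal A:=\pi_*\mathcal O_Z$; this is a quasi-coherent sheaf of $\mathcal O_Y$-algebras carrying a $(G'\times G'')$-action, and $\mathcal O_Y=\mathcal A^{G'\times G''}$. Since $\pi$ is affine, $Y$ admits an affine open cover $\{U_i=\operatorname{Spec}B_i\}$ with $\pi^{-1}(U_i)=\operatorname{Spec}A_i$ affine and $B_i=A_i^{G'\times G''}$. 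As $Z$ is of finite type over $k$, each $A_i$ is a finitely generated $k$-algebra, and because we work in characteristic zero the reductive groups $G'$, $G''$ and $G'\times G''$ are linearly reductive; hence by Nagata's finiteness theorem $A_i^{G''}$, $A_i^{G'}$ and $A_i^{G'\times G''}$ are again finitely generated $k$-algebras. In particular $Y$ is of finite type over $k$.

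Next I would construct the intermediate space $W$ directly at sheaf level. Since $G'$ commutes with $G''$, the subsheaf $\mathcal A^{G''}\subseteq\mathcal A$ is a quasi-coherent $\mathcal O_Y$-subalgebra stable under the residual $G'$-action — quasi-coherence holds because, for a $G''$-invariant section $f$ of $\mathcal O_Y$, the Reynolds operator commutes with localisation at $f$, so forming $G''$-invariants commutes with restriction to principal opens of the $U_i$. On $U_i$ one has $\mathcal A^{G''}(U_i)=A_i^{G''}$, a finitely generated $B_i$-algebra by the previous paragraph, so $\mathcal A^{G''}$ is a quasi-coherent $\mathcal O_Y$-algebra that is locally of finite type. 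Let $W$ be its relative spectrum $\operatorname{Spec}_Y(\mathcal A^{G''})$, an affine $Y$-scheme, hence of finite type over $k$, with structure map $q\colon W\to Y$, and let $p\colon Z\to W$ be the morphism over $Y$ induced by the inclusion $\mathcal A^{G''}\hookrightarrow\mathcal A$. The $G'$-action on $\mathcal A^{G''}$ makes $W$ a $G'$-scheme over $Y$ for which $p$ is $G'$-equivariant, while $G''$ acts on $Z$ over $W$; note that $p^{-1}(q^{-1}(U_i))=\operatorname{Spec}A_i$ maps onto $\operatorname{Spec}A_i^{G''}=q^{-1}(U_i)$.

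It then remains to check that $p\colon Z\to W$ is a good quotient for the $G''$-action and that $q\colon W\to Y$ is a good quotient for the $G'$-action. Both assertions are local on the target and so reduce, over each $U_i$, to the two morphisms
$$\operatorname{Spec}A_i\longrightarrow\operatorname{Spec}A_i^{G''}\longrightarrow\operatorname{Spec}\bigl(A_i^{G''}\bigr)^{G'}=\operatorname{Spec}A_i^{G'\times G''}=U_i,$$
the equality $(A_i^{G''})^{G'}=A_i^{G'\times G''}$ being immediate from the definitions. These are exactly the classical affine GIT quotients by the linearly reductive groups $G''$ and $G'$, which satisfy all the axioms of a good quotient (surjectivity, affineness, identification of invariant functions, and closedness/separation of invariant closed subsets). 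Consequently $W\cong Z\hq G''$ carries an induced $G'$-action, the good quotient $W\hq G'$ exists, and
$$Z\hq(G'\times G'')=Y\;\cong\;W\hq G'=(Z\hq G'')\hq G',$$
as claimed. The one genuinely substantive ingredient — everything else being the bookkeeping that the good-quotient axioms are local on the base and that $A^{G'\times G''}=(A^{G''})^{G'}$ — is Nagata's finiteness theorem, which is what guarantees that $W$ is again an honest scheme of finite type over $k$ rather than merely an affine $Y$-scheme; this is the step I would expect to have to invoke with the most care.
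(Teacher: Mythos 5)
Your proof is correct and follows essentially the same route as the paper: the paper invokes Ramanathan's lemma (a $G''$-equivariant affine morphism to a scheme admitting a good quotient yields a good quotient of the source, proved by taking invariants affine-locally and patching) applied to $Z\to Z\hq(G'\times G'')$, and then checks the residual $G'$-quotient property, which is exactly what your relative-Spec construction $W=\operatorname{Spec}_Y(\mathcal A^{G''})$ and the local verification over $\operatorname{Spec}A_i\to\operatorname{Spec}A_i^{G''}\to\operatorname{Spec}(A_i^{G''})^{G'}=U_i$ carry out by hand. The only difference is that you unwind the cited lemma into an explicit argument (quasi-coherence of invariants, Nagata finiteness, locality of the good-quotient axioms), which is fine.
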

\begin{proof}
First we recall the following well-known statement: suppose that $G''$ acts on schemes $Y$ and $Y'$ of finite type, and assume that $Y'$ admits a good quotient $Y'\hq G''$.  Then if $Y\to Y'$ is a $G''$-equivariant affine morphism, then $Y$ also admits a good quotient $Y\hq G''$, and the induced map $Y\hq G'' \to Y'\hq G''$ is affine (this statement is proved for example in \cite[Lemma 5.1]{Ramanathan} by first working affine locally and then patching).   We apply this to the morphism $Z\to Z\hq (G\times G'')$, which is possible as this map is affine and $G''$-invariant, and as $Z\hq (G\times G'')$  certainly admits a good quotient by the (trivial) action of $G''$.  This gives an affine $Z\hq G'' \to Z\hq (G'\times G'')$, which is easily checked to satisfy the requirements of being the good quotient of $Z\hq G''$ by $G'$.
\end{proof}

\subsection{Preliminaries on sheaves} \label{sec:prelimsheaves}   Throughout this paper, $X$ shall be a projective scheme over an algebraically closed field of characteristic zero.    The \emph{dimension} of a coherent sheaf $E$ on $X$ is the dimension of its support $\{ x\in X : E_x\neq 0\}$, and we say that $E$ is \emph{pure} of dimension $d$ if all non-trivial coherent subsheaves $F\subset E$ have dimension $d$.  Let $\tau$ be an element of $B(X)_\Q:=B(X)\otimes_\Z\Q$, where $B(X)$ is the group of cycles on $X$ modulo algebraic equivalence, see \cite[Def.~10.3]{F}.    We say that 
$E$ is \emph{of topological type} $\tau$ if its homological Todd class $\tau_X(E)$ equals  $\tau$.   Given a very ample line bundle $L$ on $X$ we say $E$ is \emph{n-regular} with respect to $L$ if $H^i(E\otimes L^{n-i}) =0$ for all $i>0$.  When dealing with several line bundles, the following definition is convenient: 

\begin{definition}
Suppose that $\underline{L}= (L_1,\ldots,L_{j_0})$, where each $L_j$ is a very ample line bundle on $X$.  We say that a coherent sheaf $E$ is \emph{$(n,\underline{L})$-regular} if $E$ is $n$-regular with respect to $L_j$ for all $j \in \{1, \ldots, j_0\}$. 
\end{definition}

\subsection{Multi-Gieseker stability}  A \emph{stability parameter} $\sigma = (\underline{L},\sigma_1,\ldots,\sigma_{j_0})$ consists of $\underline{L} = (L_1,\ldots,L_{j_0})$ for some ample line bundles $L_j$ on $X$, and  $\sigma_j\in \mathbb R_{\ge 0}$ such that not all the $\sigma_j$ are zero.  We say that $\sigma$ is \emph{rational} if all the $\sigma_j$ are rational.  We say $\sigma$ is \emph{positive} if $\sigma_j>0$ for all $j$; otherwise $\sigma$ is said to be \emph{degenerate}.

In the subsequent discussion the vector $\underline{L}$ will be fixed, so by abuse of notation we will sometimes confuse $\sigma$ and the vector $(\sigma_1,\ldots,\sigma_{j_0})$.    Thus, we allow $\sigma$ to vary in a subset of $(\mathbb R_{\ge 0})^{j_0}\setminus\{0\}$.  We emphasise that whereas we allow the $\sigma_j$ to be irrational, we will always assume that the $L_j$ are genuine (integral) line bundles. The \emph{multi-Hilbert polynomial} of a coherent sheaf $E$ of dimension $d$ with respect to such a stability parameter $\sigma$ is
   \begin{equation*}
     P_E^\sigma(m) := \sum\nolimits_{j} \sigma_j \chi(E\otimes L_j^m) =\sum\nolimits_{i=0}^d \alpha^{\sigma}_i(E) \frac{m^i}{i!},
   \end{equation*}
where $\alpha^{\sigma}_i(E)$ are uniquely defined coefficients, cf.~\cite[p.~10]{Bible}.  We denote the leading one by $r^{\sigma}_E:=\alpha^\sigma_d(E)$ and note that it is strictly positive by the hypothesis on $\sigma$.

\begin{definition}[Multi-Gieseker-stability]
The \emph{reduced multi-Hilbert polynomial} of a coherent sheaf of dimension $d$ is defined to be
$$ p_E^{\sigma}(m) := \frac{ P_E^{\sigma}(m)}{r^{\sigma}_E}.$$
   We say that a coherent sheaf $E$ is \emph{multi-Gieseker-(semi)stable} with respect to this data if it is pure and if for all non-trivial proper subsheaves $F\subset E$ we have 
  \begin{equation*}
p_F^{\sigma} (\le) p_E^{\sigma}.
\end{equation*}
\end{definition}
When convenient, we will refer to this notion as being \emph{$\sigma$-semistable} or merely \emph{semistable} in case no confusion may arise.  Of course, if $\sigma =\underline{e}_i = (0,\ldots,1,\ldots,0)$ is the standard basis vector, then $E$ is (semi)stable with respect to $\sigma$ if and only if it is Gieseker-(semi)stable with respect to $L_i$.      The notion of multi-Gieseker-stability comes with the standard notion of \emph{Jordan-H\"older filtration}, and we say two semistable sheaves $E$ are \emph{S-equivalent} if the graded modules associated with the respective filtrations are isomorphic.

\subsection{Boundedness}\label{sec:boundedness}
A set $\mathcal S$ of isomorphism classes of coherent sheaves on $X$ is said to be \emph{bounded} if there exists a scheme $S$ of finite type and a coherent $\mathcal O_{S\times X}$-sheaf $\mathcal E$ such that every $E\in \mathcal S$ is isomorphic to $\mathcal E_{\{s\}\times X}$ for some closed point $s\in S$.    From \cite[Lemma 1.7.6]{Bible}, we know that the set of $(n,\underline{L})$-regular sheaves of a given topological type is bounded.  Conversely, it follows from Serre Vanishing that, if $\mathcal S$ is a bounded family of sheaves, then for $n \gg 0$ each $E\in \mathcal S$ is $(n,\underline{L})$-regular.

Throughout this paper, we will work under the hypothesis that the set of sheaves of a given topological type that are semistable with respect to any of the stability parameters under consideration at that time is bounded.  If $X$ is smooth, and if the sheaves in question are torsion-free, we proved in \cite[Corollary 6.12]{GRTI} that this hypothesis holds if either (a) the rank of the sheaves in question is 2 or (b) the Picard rank of $X$ is at most 2 or (c) the dimension of $X$ is at most 3.   We also prove \cite[Theorem 6.8]{GRTI} that this boundedness hypothesis holds for torsion-free sheaves on a manifold of dimension $d$ as long as the stability parameters $\sigma = (L_1,\ldots,L_{j_0},\sigma_1,\ldots,\sigma_{j_0})$ in question have the property that $\sum_j \sigma_j c_1(L)^{d-1}$ lies in a given compact subset of the image of the ample cone under the map $N^1(X)_{\mathbb R}\to N_1(X)_{\mathbb R}$, $x\mapsto x^{d-1}$.  This latter statement is usually adequate to ensure that the desired boundedness holds for studying the problem of variation of moduli spaces of Gieseker-semistable sheaves, see also the discussion preceding the proof of the Mumford-Thaddeus principle for Gieseker-moduli spaces in Section~\ref{sect:master_construction}.

\subsection{Quivers and their representations }\label{sec:quiver}
Following the ideas of \'Alvarez-C\'onsul--King presented in \cite{ConsulKing} we now discuss how to embed the category of sufficiently regular sheaves into the category of representations for certain quivers.   We will use the standard notations used in representation theory of quivers, as fixed for example in \cite[Section~3]{King}.   We denote by $\mathbf{Vect}_k$ the category of vector spaces over a field $k$.  Given a $j_0\in \mathbb N^+$ we define a labelled quiver  $\mathcal{Q} = (\mathcal{Q}_0, \mathcal{Q}_1, h,t\colon \mathcal{Q}_1 \to \mathcal{Q}_0, H\colon \mathcal{Q}_1 \to \mathbf{Vect}_k)$ as follows.  Let $ \mathcal{Q}_0 := \{v_i, w_j \mid i,j = 1, \dots, j_0\}$ be a set of pairwise distinct vertices, and $\mathcal{Q}_1 := \{\alpha_{ij} \mid i,j= 1, \dots, j_0 \}$
the set of arrows, whose heads and tails are given by $h (\alpha_{ij}) = w_j$ and $t (\alpha_{ij}) = v_i$.  The arrows will each be labelled by a vector space, encoded by a function $H\colon \mathcal{Q}_1 \to \mathbf{Vect}_k$ written as $H(\alpha_{ij}) = H_{ij}$, which will be fixed later.  This quiver can be pictured as follows (where for better readability we restrict to the case $j_0=3$): \enlargethispage{0.4cm}
\[\begin{xymatrix}
{ \bullet \ar[rrrrr]|<<<<<<<<<<<<{H_{11}}\ar[rrrrrdd]|<<<<<<<<<<<<{H_{12}}\ar[rrrrrdddd]|<<<<<<<<<<<<{H_{13}}&  & & &  &\bullet \\
                                        &  &  & &  &     \\
  \bullet \ar[rrrrruu]|<<<<<<<<<<<<{H_{21}} \ar[rrrrr]|<<<<<<<<<<<<{H_{22}} \ar[rrrrrdd]|<<<<<<<<<<<<{H_{23}}& & & & & \bullet \\
                            &   & &  &   &    \\
  \bullet \ar[rrrrruuuu]|<<<<<<<<<<<<{H_{31}} \ar[rrrrr]|<<<<<<<<<<<<{H_{33}} \ar[rrrrruu]|<<<<<<<<<<<<{H_{32}}& & & & &\bullet    .
}
  \end{xymatrix}
\]

A \emph{representation} $M$ of $\mathcal{Q}$ over a field $k$ is a collection $V_i, W_j, i,j= 1, \ldots, n$ of $k$-vector spaces together with $k$-linear maps $\phi_{ij}\colon V_i \otimes H_{ij} \to W_j$.   By abuse of notation we write this as $M = \bigoplus_j V_j \oplus W_j$, where it is understood that $V_j$ (resp. $W_j$) is associated with the vertex $v_j$ (resp. $w_j$) of $\mathcal{Q}$.  A \emph{subrepresentation} is a collection of subspaces $V_i'\subset V_i$ and $W_i'\subset W_i$ such that $\phi_{ij}(V_i'\otimes H_{ij}) \subset W_{j}'$.  We call the vector $$\underline{d} := (\dim V_1, \dim W_1,\dots, \dim V_{j_0}, \dim W_{j_0})=: (d_{11}, d_{12}, \ldots, d_{{j_0 1}}, d_{{j_0 2}})$$ the \emph{dimension vector} of $M$.   The abelian category of representation of $\mathcal{Q}$ is equivalent to the abelian category of $A$-modules, where $A$ is the (finite-dimensional) path algebra of $\mathcal{Q}$, cf.~\cite{King}.  Henceforth, will refer to a representation of $\mathcal{Q}$ as an \emph{$A$-module}.

\subsection{Families} \label{sec:families} Let $S$ be a scheme.   A \emph{flat family $\mathscr{M}$ of $A$-modules over $S$} is a sheaf $\mathscr{M}$ of modules over the sheaf of algebras $\mathscr{A} := \mathcal{O}_S \otimes A$ on $S$ that is locally free as a sheaf of $\mathcal{O}_S$-modules. In other words, we have a decomposition $\mathscr{M}=\bigoplus_j \mathscr V_j\oplus \mathscr W_j$, where $\mathscr V_j$ and $\mathscr W_j$ are vector bundles on $S$ associated with the vertices $v_j,w_j$ of $\mathcal{Q}$, together with sheaf morphisms $\mathscr{V}_i\otimes H_{ij} \to \mathscr W_j$ associated with the arrows of $\mathcal{Q}$. By a \emph{trivialisation} of such an $\mathscr{M}$ we mean a trivialisation of each $\mathscr V_j$ and $\mathscr W_j$.
The \emph{representation space} 
\begin{equation}
R:= \mathrm{Rep}(\mathcal{Q}, \underline{d}) :=  \bigoplus\nolimits_{i,j=1}^{j_0} \mathrm{Hom}_k (k^{d_{i1}} \otimes H_{ij}, k^{d_{j2}} ) \label{eq:repspace}\end{equation}
parametrises representations of $\mathcal{Q}$. The reductive group $G:=  \prod\nolimits_{j=1}^{j_0} \bigl(GL_k(d_{j1}) \times GL_k(d_{j2}) \bigr)$
acts linearly on $R$ by conjugation, and the orbits correspond to isomorphism classes of $A$-modules of dimension $\underline{d}$.   We denote the tautological flat family of $A$-modules over $R$ by $\mathscr{U}$, which we observe comes with a canonical trivialisation. Note that a trivialisation of a flat family $\mathscr{M}$ of $A$-modules over a scheme $S$ gives a canonical morphism $\sigma\colon S\to R$ and isomorphism $\sigma^* \mathscr{U} \cong \mathscr{M}$, and a different choice of trivialisation changes $\sigma$ by the action of a unique morphism $S\to G$.

\subsection{Embedding categories of sheaves into categories of quiver representations}\label{subsect:sheaves_into_representations}
To connect the preceding discussion with the moduli theory of sheaves,  given line bundles $L_j$ on $X$ for $j=1,\ldots,j_0$ and integers $m > n$, we let $H_{ij} := \Hom(L_j^{-m}, L_i^{-n})$.  Given a coherent sheaf $E$ we consider the representation of $\mathcal{Q}$ in which $V_i= H^0(E\otimes L_i^n)$ and $W_j= H^0(E\otimes L_j^m)$ and $\phi_{ij}\colon H^0(E\otimes L_i^n)\otimes H_{ij} \to H^0(E\otimes L_j^m)$ are the natural multiplication maps.  Defining a sheaf $T := \bigoplus\nolimits_{j= 1}^{j_0} L_j^{-n}  \oplus L_j^{-m}$,  we denote this representation by 
\[\Hom(T,E) = \bigoplus\nolimits_{j} H^0(E\otimes L_j^{n}) \oplus  H^0(E\otimes L_j^{m}).\]
In the set-up at hand, the path algebra is isomorphic to the finite-dimensional $k$-algebra $A = L \oplus H \subset \mathrm{End}_X(T)$
generated by the projection operators onto the summands $L_i^{-n}$ and $L_j^{-m}$ of $T$ (collected in the subalgebra $L$) and $H = \bigoplus\nolimits_{i,j} H_{ij}$ (see \cite[Section~5.1]{GRTI} for more details). Then, the $\mathcal{Q}$-representation $\Hom(T,E)$ has the obvious structure of an $A$-module. The next theorem says that this way of associating an $A$-representation to a sheaf does not lose any information if the sheaf in question is $n$-regular and $m$ is big enough.

\begin{theorem}[Embedding regular sheaves into the category of representations of $\mathcal{Q}$, {\cite[Theorem~5.7]{GRTI}}]\label{thm:categoryembedding}
Suppose that $\underline{L}= (L_1,\ldots,L_{j_0})$, where each $L_j$ is a very ample line bundle on $X$. Given $n\in \mathbb N$, for all $m \gg n$, the functor 
$$ E \mapsto \Hom(T,E)$$
is a fully faithful embedding from the category of $(n,\underline{L})$-regular sheaves of topological type $\tau$ to the category of representations of $\mathcal{Q}$.  In other words, if $E$ is an $(n,\underline{L})$-regular sheaf of topological type $\tau$, the natural evaluation map $\varepsilon_E\colon \Hom(T, E) \otimes_A T \to E$ is an isomorphism. 
\end{theorem}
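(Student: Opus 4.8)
The plan is to follow the strategy of \'Alvarez-C\'onsul and King, adapted to the labelled quiver $\mathcal{Q}$ and to the several very ample bundles $L_j$. The clause ``fully faithful'' is equivalent to the assertion that $\varepsilon_E$ is an isomorphism for every $(n,\underline{L})$-regular sheaf $E$ of topological type $\tau$: the functor $E\mapsto\Hom(T,E)$ has left adjoint $M\mapsto M\otimes_A T$ with counit $\varepsilon$, so once $\varepsilon_E$ is an isomorphism the adjunction yields, for any two such sheaves $E,E'$,
$$\Hom_A\bigl(\Hom(T,E),\Hom(T,E')\bigr)\;\cong\;\Hom_X\bigl(\Hom(T,E)\otimes_A T,\,E'\bigr)\;\cong\;\Hom_X(E,E'),$$
and the triangle identities identify this with the map induced by the functor. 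So I would first make this reduction and then concentrate entirely on $\varepsilon_E$.

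To compute the source of $\varepsilon_E$, I would use the standard projective presentation of a representation of a quiver without relations. Let $P_{v_i}$ and $P_{w_j}$ denote the indecomposable projective $A$-modules attached to the vertices $v_i,w_j$; since every arrow of $\mathcal{Q}$ runs from a $v$-vertex to a $w$-vertex, any $A$-module $M=(V_i,W_j,\phi_{ij})$ sits in a functorial short exact sequence
$$0\longrightarrow\bigoplus_{i,j}P_{w_j}\otimes_k H_{ij}\otimes_k V_i\longrightarrow\Bigl(\bigoplus_i P_{v_i}\otimes_k V_i\Bigr)\oplus\Bigl(\bigoplus_j P_{w_j}\otimes_k W_j\Bigr)\longrightarrow M\longrightarrow 0.$$
Applying the right exact functor $-\otimes_A T$ and using the identifications $P_{v_i}\otimes_A T\cong L_i^{-n}$, $P_{w_j}\otimes_A T\cong L_j^{-m}$ (the idempotent at $v_i$, resp. $w_j$, projecting $T$ onto the corresponding summand), one finds, for $M=\Hom(T,E)$ — so that $V_i=H^0(E\otimes L_i^n)$, $W_j=H^0(E\otimes L_j^m)$, and $H_{ij}=H^0(L_i^{-n}\otimes L_j^m)$ — that $\Hom(T,E)\otimes_A T$ is the cokernel of an explicit complex of sheaves
$$\bigoplus_{i,j}L_j^{-m}\otimes H_{ij}\otimes V_i\;\xrightarrow{\,d_1\,}\;\Bigl(\bigoplus_i L_i^{-n}\otimes V_i\Bigr)\oplus\Bigl(\bigoplus_j L_j^{-m}\otimes W_j\Bigr)\;\xrightarrow{\,d_0\,}\;E,$$
where $d_0$ is the sum of the evaluation maps and $d_1$ is assembled from the multiplication maps $L_j^{-m}\otimes H_{ij}\to L_i^{-n}$ and from the $\phi_{ij}$; moreover $\varepsilon_E$ is the map induced by $d_0$, so $\varepsilon_E$ is an isomorphism if and only if this three-term complex is exact at both spots.

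It remains to prove exactness, and this is where $(n,\underline{L})$-regularity and $m\gg n$ enter. Exactness at $E$ is Castelnuovo--Mumford regularity: $(n,\underline{L})$-regularity makes each evaluation $H^0(E\otimes L_i^n)\otimes L_i^{-n}\to E$ surjective. For exactness in the middle one proceeds in two steps. The $W_j$-component of $d_1$ equals $L_j^{-m}$ tensored with the linear map $\bigoplus_i H^0(L_i^{-n}\otimes L_j^m)\otimes H^0(E\otimes L_i^n)\to H^0(E\otimes L_j^m)$, whose $i=j$ summand is the multiplication map $H^0(L_j^{m-n})\otimes H^0(E\otimes L_j^n)\to H^0(E\otimes L_j^m)$; this is surjective as soon as the first syzygy sheaf $K_j=\ker\bigl(H^0(E\otimes L_j^n)\otimes L_j^{-n}\to E\bigr)$ satisfies $H^1(K_j\otimes L_j^m)=0$, and hence so is the whole $W_j$-component of $d_1$. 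One may then use it to remove the $W_j$-entries of any local section of $\ker d_0$ modulo $\mathrm{im}\,d_1$, reducing matters to showing that the syzygy sheaf $\mathcal{K}=\ker\bigl(\bigoplus_i H^0(E\otimes L_i^n)\otimes L_i^{-n}\to E\bigr)$ coincides with the image under $d_1$ of the kernel of the $W$-component of $d_1$ — which again holds once $\mathcal{K}$ and the $K_j$ are sufficiently regular. Crucially, by \cite[Lemma~1.7.6]{Bible} the family of $(n,\underline{L})$-regular sheaves of topological type $\tau$ is bounded, hence so are the associated families of syzygy sheaves $K_j$ and $\mathcal{K}$, so Serre vanishing provides a \emph{single} $m\gg n$ for which all the required $H^1$-groups vanish and all the required multiplication and evaluation maps are surjective, uniformly over the family.

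I expect the genuine difficulty to lie in this last, quantitative step rather than in the formal parts: one must check that the off-diagonal multiplication maps $H^0(E\otimes L_i^n)\otimes H^0(L_i^{-n}\otimes L_j^m)\to H^0(E\otimes L_j^m)$ for $i\neq j$, the sheaf-level surjections $L_j^{-m}\otimes H^0(L_i^{-n}\otimes L_j^m)\to L_i^{-n}$, and the regularity of the various syzygy sheaves can all be arranged with one value of $m$ valid for every sheaf of topological type $\tau$. It is precisely the bookkeeping of these conditions across several polarisations, as opposed to the single-polarisation situation of \'Alvarez-C\'onsul--King, that is the real content, and it rests on the multi-regularity and boundedness statements recalled above.
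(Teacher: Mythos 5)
Your proposal is correct and follows essentially the same route as the paper's source for this statement: the theorem is not reproved here but quoted from \cite{GRTI} (Theorem~5.7), whose proof is exactly the \'Alvarez-C\'onsul--King argument you reconstruct — reduce full faithfulness to the counit $\varepsilon_E$ via the adjunction, resolve $\Hom(T,E)$ by the canonical projective presentation of the (labelled, relation-free) quiver, identify $\Hom(T,E)\otimes_A T$ as the cokernel of the resulting three-term complex, and prove its exactness using $(n,\underline{L})$-regularity for surjectivity of evaluation, the multiplication/syzygy vanishing $H^1(K\otimes L_j^m)=0$ together with global generation of the twisted syzygy sheaves, and boundedness of the family of $(n,\underline{L})$-regular sheaves of type $\tau$ plus Serre vanishing to choose a single $m$ uniformly. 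The reductions you give are sound (the only superfluous item is the surjectivity of $L_j^{-m}\otimes H^0(L_i^{-n}\otimes L_j^m)\to L_i^{-n}$, which is not actually needed), so the proposal matches the cited proof's approach.
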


We remark that part of the proof of the above theorem is the construction of the adjoint functor of $E\mapsto \Hom(T,E)$, which above and henceforth is denoted $M\mapsto M\otimes_A T$.  In the following, for a sheaf $\mathcal{E}$ on $X\times S$ and a sheaf $\mathscr{M}$ of $A$-modules on $S$ we shall use the abbreviations 
\[\sheafHom_X(T, \mathcal{E}):= p_{S*} \bigl(\sheafHom_{X \times S}(p_X^*T, \mathcal{E}) \bigr) \cong p_{S*} \bigl(\mathcal{E} \otimes_{\mathcal{O}_{X \times S}} T^\vee  \bigr)\]
and 
\[\mathscr{M} \otimes_\mathscr{A} T := p_S^* \mathscr{M} \otimes_{\mathscr{A}} p_X^* T, \] 
where $\mathscr{A} := \mathcal{O}_S \otimes A$, and $p_S\colon X \times S \to S$ resp.~$p_X\colon X \times S \to X$ are the canonical projections.  The previous  result, Theorem \ref{thm:categoryembedding}, extends in a natural way to flat families of sheaves and modules (see \cite[Proposition 5.8]{GRTI}  for the precise statement).   In the next step, we identify the image of the embedding functor. For this, we let $P_j(k) = \chi(E\otimes L_j^k)$ be the Hilbert-polynomial for a (resp.\ any) coherent sheaf $E$ of topological type $\tau$ with respect to $L_j$.

\begin{proposition}[Identifying the image of the embedding functor, {\cite[Proposition~5.9]{GRTI}}]\label{prop:identifyingtheimage}
For $m\gg n$ as in Theorem~\ref{thm:categoryembedding} the following holds: If $B$ is any Noetherian scheme and $\mathscr{M}$ a $B$-flat family of $A$-modules of dimension vector $$\underline{d} = \bigl(P_1(n), P_1(m), \ldots, P_{j_0} (n), P_{j_0} (m)\bigr),$$ then there exists a (unique) locally closed subscheme $\iota\colon B^{[reg]}_{\tau} \hookrightarrow B$
with the following properties.
\begin{enumerate}
 \item[(a)] $\iota^* \mathscr{M} \otimes_\mathscr{A} T$ is a $B^{[reg]}_{\tau}$-flat family of $(n,\underline{L})$-regular sheaves on $X$ of topological type $\tau$, and the unit map
 \[\eta_{\iota^*\mathscr{M}}\colon  \iota^* \mathscr{M} \to \sheafHom_X(T, \iota^* \mathscr{M} \otimes_\mathscr{A} T)\]
is an isomorphism.
 \item[(b)] If $\sigma\colon S \to B$ is such that $\sigma^* \mathscr{M} \cong \sheafHom_X (T,\mathcal{E})$ for an $S$-flat family $\mathcal{E}$ of $(n,\underline{L})$-regular sheaves on $X$ of topological type $\tau$, then $\sigma$ factors through $\iota\colon B^{[reg]}_{\tau} \hookrightarrow B$ and $\mathcal{E} \cong \sigma^*\mathscr{M} \otimes_\mathscr{A} T$.
\end{enumerate}
\end{proposition}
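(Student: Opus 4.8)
The plan is to realise $\iota\colon B^{[reg]}_\tau\hookrightarrow B$ as a short chain of locally closed subschemes of $B$ singling out exactly those base changes over which the adjoint sheaf $\mathscr{F}:=\mathscr{M}\otimes_\mathscr{A} T$ on $X\times B$ is as well-behaved as it is forced to be. First note that, by the flat-family version of Theorem~\ref{thm:categoryembedding} recorded in \cite[Proposition~5.8]{GRTI}, the formation of $\mathscr{F}$ commutes with arbitrary base change, and whenever $\sigma\colon S\to B$ satisfies the hypothesis of (b) the adjunction isomorphisms of Theorem~\ref{thm:categoryembedding} identify $\mathcal E\cong\sigma^*\mathscr{F}$. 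Hence $\mathscr{F}$ is the only candidate for the universal family of regular sheaves, and the entire task is to cut out the largest subscheme of $B$ over which $\mathscr{F}$ is flat with $(n,\underline L)$-regular fibres of topological type $\tau$ and over which the unit map is an isomorphism.

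For the construction I would proceed in three steps. \emph{Step 1 (flattening and topological type).} By the standard existence theorem for flattening stratifications, $B$ decomposes into finitely many locally closed subschemes over which $\mathscr{F}$ becomes flat with locally constant Hilbert polynomial with respect to $L_1$; refining by connected components and invoking the fact that the homological Todd class is constant in flat families over a connected base (which holds precisely because $B(X)_\Q$ uses algebraic equivalence), one obtains a locally closed subscheme $B_1\hookrightarrow B$ with the universal property that $f\colon T'\to B$ factors through $B_1$ if and only if $f^*\mathscr{F}$ is $T'$-flat with fibres of topological type $\tau$. \emph{Step 2 (regularity).} Over $B_1$ the sheaf $\mathscr{F}$ is flat, so by semicontinuity the locus $B_2\subset B_1$ where in addition $H^i(\mathscr{F}_b\otimes L_j^{n-i})=0$ for all $i>0$ and all $j$ is open; thus $B_2\hookrightarrow B$ is locally closed and parametrises exactly the base changes of $\mathscr{F}$ that are flat with $(n,\underline L)$-regular fibres of type $\tau$. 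Over $B_2$, regularity (which also forces $(m,\underline L)$-regularity, as $m\ge n$) together with cohomology and base change shows that $\sheafHom_X(T,\mathscr{F}|_{B_2})$ is a flat family of $A$-modules whose formation commutes with base change and whose dimension vector is precisely $\underline d=(P_1(n),P_1(m),\dots)$, the same as that of $\iota_{B_2}^*\mathscr{M}$. \emph{Step 3 (the unit map).} The unit morphism $\eta\colon\iota_{B_2}^*\mathscr{M}\to\sheafHom_X(T,\mathscr{F}|_{B_2})$ is then a homomorphism of locally free $\mathcal O_{B_2}$-modules of equal rank, and I define $B^{[reg]}_\tau:=B_3\subset B_2$ to be the open subscheme where the induced section $\det\eta$ of the corresponding line bundle is nowhere zero. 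On $B_3$ the map $\eta$ is an isomorphism, and since $\iota^*\mathscr{M}\otimes_\mathscr{A} T=\mathscr{F}|_{B_3}$ by base change, property (a) holds by construction.

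For (b), let $\sigma\colon S\to B$ be as in the statement. As noted above the adjunction gives $\mathcal E\cong\sigma^*\mathscr{F}$, which is therefore $S$-flat with $(n,\underline L)$-regular fibres of type $\tau$; moreover it transports the given isomorphism $\sigma^*\mathscr{M}\cong\sheafHom_X(T,\mathcal E)$ into the assertion that $\sigma^*\eta$ is an isomorphism. By the universal properties established in Steps 1--3, $\sigma$ factors through $B_1$, then through the open $B_2$, then through the open $B_3=B^{[reg]}_\tau$, and $\mathcal E\cong\sigma^*\mathscr{F}=\sigma^*\bigl(\mathscr{M}\otimes_\mathscr{A} T\bigr)$, as required. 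Uniqueness of $B^{[reg]}_\tau$ is then formal: any locally closed subscheme of $B$ with properties (a) and (b) satisfies the hypothesis of (b) for the other one, so the two receive mutually inverse factorisations and hence agree.

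The step I expect to be the main obstacle is not a single estimate but the compatibility bookkeeping underlying (b): one must verify that the base-change isomorphism for $\mathscr{F}$, the base-change isomorphism for $\sheafHom_X(T,-)$ over the regular locus, and the adjunction identities of Theorem~\ref{thm:categoryembedding} fit together so that the factorisation of $\sigma$ through $B_1$, $B_2$, $B_3$ is scheme-theoretic rather than merely set-theoretic. One must also be careful that restricting the flattening stratification to connected components genuinely pins down the topological type $\tau$ — which is finer than the Hilbert polynomial of $L_1$ alone — rather than only a numerical invariant. Once these compatibilities are in place, (a), (b) and uniqueness follow as above.
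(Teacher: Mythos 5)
Your proposal is correct and follows essentially the same route as the proof this statement is imported with from \cite[Proposition~5.9]{GRTI} (itself modelled on \'Alvarez-C\'onsul--King \cite{ConsulKing}): flattening stratification of $\mathscr{M}\otimes_\mathscr{A}T$ combined with local constancy of the topological type over connected bases, the open locus of $(n,\underline{L})$-regular fibres, and then the open locus where the unit map between locally free sheaves of equal rank is an isomorphism, with part (b) and uniqueness coming from the adjunction and base-change compatibilities you correctly identify as the main bookkeeping. Note that the present paper gives no proof of this proposition but merely cites it, so your write-up is in effect a reconstruction of the argument of \cite{GRTI}, and it is sound.
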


\subsection{Preservation of stability}  We now explain how the functor $T\mapsto \Hom(T,E)$ preserves stability.  For this we need King's notion of stability for $A$-modules $M$ of dimension vector $\underline{d} = (d_{11}, d_{12}, \ldots, d_{{j_0 1}}, d_{{j_0 2}})$.  Fix  $\sigma = (\sigma_1,\ldots,\sigma_{j_0})$ with $\sigma_j\in \mathbb Q_{\ge 0}$ not all equal to zero.  Define a vector $\theta_\sigma = (\theta_{11}, \theta_{12}, \ldots, \theta_{{j_0 1}}, \theta_{{j_0 2}})$ by
\begin{equation*}
 \theta_{j1} := \frac{\sigma_j}{\sum_i \sigma_i d_{i1}}\text{ and }\theta_{j2} := \frac{- \sigma_j}{\sum_i \sigma_i d_{i2}}\text{ for }j= 1, \ldots, j_0.
\end{equation*}
For an $A$-module $M' = \bigoplus V_j'\oplus W_j'$, we set
\begin{equation*}
 \theta_\sigma(M') := \sum\nolimits_{j} \theta_{j1} \dim V_j' + \sum\nolimits_{j} \theta_{j2} \dim W_j',
\end{equation*}
which makes $\theta_{\sigma}$ an additive function from the set $\mathbb{N}^{2 j_0}$ of possible dimension vectors to $\mathbb R$.  Note that if $M$ is an $A$-module of dimension vector $\underline{d}$, then  $\theta_{\sigma}(M)=\sum_j (\theta_{j1} d_{j1} + \theta_{j2} d_{j2})=0$.

\begin{definition}[Semistability for $A$-modules]\label{def:semistabilitymodule}
Let $M$ be an $A$-module with dimension vector $\underline{d}$.  We say that $M$ is \emph{(semi)stable (with respect to $\sigma$)} if for all non-trivial proper submodules $M'\subset M$ we have $\theta_\sigma(M')(\le)0$.  We let $R^{\sigma\text{-ss}}$ denote the open subset of $R = \mathrm{Rep}(\mathcal{Q}, \underline{d}) $ consisting of modules that are semistable with respect to $\sigma$.   
\end{definition} 

Every $\sigma$-semistable $A$-module has a Jordan-H\"older filtration with respect to $\theta_\sigma$, cf.~\cite[p.~521/522]{King}, and we call two modules \emph{$S$-equivalent} if the graded modules associated with the respective filtrations are isomorphic.  The following is the main technical result of \cite{GRTI}, for which, as always, we presume that the set of sheaves of topological type $\tau$ that are semistable with respect to $\sigma$ is bounded.

\begin{theorem}[Comparison of semistability and JH filtrations,{\cite[Theorem 8.1]{GRTI}}]\label{thm:mainsemistabilitycomparison} 
 For all integers $m\gg n\gg p\gg 0$ the following holds  for any sheaf $E$ on $X$ of topological type $\tau$:
 \begin{enumerate}
 \item  $E$ is semistable if and only if it is pure, $(p,\underline{L})$-regular, and $\Hom(T,E)$ is semistable.
 \item Suppose $\sigma$ is positive.  If $E$ is semistable, then
$$ \Hom(T,gr E)\cong gr \Hom(T,E),$$
where $gr$ denotes the graded object coming from a Jordan-H\"older filtration of $E$ or $\Hom(T,E)$, respectively.  In particular, two semistable sheaves $E$ and $E'$ are $S$-equivalent if and only if $\Hom(T,E)$ and $\Hom(T,E')$ are $S$-equivalent.  
 \end{enumerate}
\end{theorem}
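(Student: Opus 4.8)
The plan is to follow the strategy of \'Alvarez-C\'onsul--King \cite{ConsulKing}: pass back and forth between a subsheaf $F\subseteq E$ and the submodule of $\Hom(T,E)$ it generates, and translate the defining inequality for multi-Gieseker stability into the numerical inequality for $\theta_\sigma$ by evaluating multi-Hilbert polynomials at the two twists $n$ and $m$. First I would fix the thresholds. By the boundedness hypothesis and \cite[Lemma 1.7.6]{Bible}, the set of sheaves of type $\tau$ that are $\sigma$-semistable for the parameters under consideration is bounded, hence all of them are $(p,\underline L)$-regular once $p\gg 0$, and since the $L_j$ are very ample, regularity is preserved under enlarging the twist, so they are also $(n,\underline L)$- and $(m,\underline L)$-regular. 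I would then need uniform regularity for the two families of subsheaves that enter the argument: (a) subsheaves $F\subseteq E$ that arise as quotients of a fixed $\bigoplus_i L_i^{-n,\oplus d_{i1}}$ (those generated by a subspace of $\bigoplus_i H^0(E\otimes L_i^n)$) — these form a bounded family for each $n$; and (b) saturated subsheaves $F\subseteq E$ with $p_F^\sigma\ge p_E^\sigma$, which are bounded by a Grothendieck--Maruyama-type estimate. Finally, I would record an elementary comparison-of-polynomials lemma: for $F$ in a bounded family and $E$ of type $\tau$ of the same dimension $d$ as $F$, one has $p_F^\sigma\le p_E^\sigma$ as polynomials if and only if $\sum_j\theta_{j1}\chi(F\otimes L_j^n)+\sum_j\theta_{j2}\chi(F\otimes L_j^m)\le 0$ for all $m\gg n\gg 0$ from explicit, family-dependent thresholds; this holds because $k\mapsto P_F^\sigma(k)/P_E^\sigma(k)$ is eventually monotone with limit $r_F^\sigma/r_E^\sigma$, and the displayed quantity equals $P_F^\sigma(n)/P_E^\sigma(n)-P_F^\sigma(m)/P_E^\sigma(m)$.

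With this in place, for part (1) purity and $(p,\underline L)$-regularity of a semistable $E$ are immediate from the definition and the first step. To see that $\Hom(T,E)$ is $\sigma$-semistable, take a nonzero proper submodule $M'=\bigoplus_j(V_j'\oplus W_j')$, let $F\subseteq E$ be the subsheaf generated by the images of the evaluation maps $V_i'\otimes L_i^{-n}\to E$, and observe that $\dim V_j'\le h^0(F\otimes L_j^n)$ (every section in $V_j'$ factors through $F$) while $\dim W_j'\ge h^0(F\otimes L_j^m)$ (because $M'$ is a submodule and, for $m\gg n$, the multiplication maps $\bigoplus_i V_i'\otimes H_{ij}\to H^0(F\otimes L_j^m)$ are surjective on the bounded family (a)). Since $\theta_{j1}\ge 0$ and $\theta_{j2}\le 0$, these inequalities give $\theta_\sigma(M')\le\sum_j\theta_{j1}h^0(F\otimes L_j^n)+\sum_j\theta_{j2}h^0(F\otimes L_j^m)$; by uniform regularity the $h^0$'s become Euler characteristics, and the comparison lemma applied to $p_F^\sigma\le p_E^\sigma$ yields $\theta_\sigma(M')\le 0$. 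Conversely, if $E$ is pure, $(p,\underline L)$-regular and $\Hom(T,E)$ is $\sigma$-semistable but $E$ is not $\sigma$-semistable, choose a saturated destabilising $F\subseteq E$; it lies in family (b), so $\Hom(T,F)\subseteq\Hom(T,E)$ is a genuine submodule (left-exactness of $H^0$) with all component dimensions equal to the relevant Euler characteristics, and the comparison lemma with strict inequality gives $\theta_\sigma(\Hom(T,F))>0$, a contradiction. The same argument with strict inequalities throughout settles the stable case.

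For part (2), with $\sigma$ positive we have $\theta_{j1}>0$ and $\theta_{j2}<0$ strictly, so a submodule $M'\subseteq\Hom(T,E)$ with $\theta_\sigma(M')=0$ forces every inequality in the previous paragraph to be an equality: $\dim V_j'=h^0(F\otimes L_j^n)$ and $\dim W_j'=h^0(F\otimes L_j^m)$ for all $j$, and $p_F^\sigma=p_E^\sigma$. Hence $M'=\Hom(T,F)$ for a subsheaf $F$ with the same reduced multi-Hilbert polynomial as $E$, while conversely each step $E_i$ of a Jordan--H\"older filtration of $E$ produces a submodule $\Hom(T,E_i)$. Starting from a Jordan--H\"older filtration $0=E_0\subset\cdots\subset E_\ell=E$ of $E$ (all $E_i$, $E_i/E_{i-1}$ are $\sigma$-semistable with reduced multi-Hilbert polynomial $p_E^\sigma$, hence lie in the bounded family and are uniformly regular), applying the functor $\Hom(T,-)$, which is exact on regular sheaves by left-exactness of $H^0$ together with the uniform vanishing, yields a filtration of $\Hom(T,E)$ whose $i$-th graded piece is $\Hom(T,E_i/E_{i-1})$, which is $\sigma$-stable by part (1) and has $\theta_\sigma=0$; this is therefore a Jordan--H\"older filtration of $\Hom(T,E)$. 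By the Jordan--H\"older theorem for $A$-modules \cite{King} its graded object is well defined, so $gr\,\Hom(T,E)\cong\bigoplus_i\Hom(T,E_i/E_{i-1})\cong\Hom(T,gr\,E)$, using that $\Hom(T,-)$ commutes with finite direct sums. The last assertion then follows: $E\sim_S E'$ iff $gr\,E\cong gr\,E'$, which by full faithfulness of $\Hom(T,-)$ (Theorem~\ref{thm:categoryembedding}) is equivalent to $\Hom(T,gr\,E)\cong\Hom(T,gr\,E')$, i.e.\ to $gr\,\Hom(T,E)\cong gr\,\Hom(T,E')$, i.e.\ to $\Hom(T,E)\sim_S\Hom(T,E')$. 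The main obstacle in all of this is uniformity: producing one triple of thresholds $p\ll n\ll m$ that simultaneously makes regular every sheaf of type $\tau$ and every auxiliary subsheaf in families (a) and (b) — in particular disentangling the circularity that the sheaves in (a) are themselves defined in terms of $n$ — which rests on the boundedness hypothesis together with Grothendieck--Maruyama boundedness for subsheaves of bounded Hilbert polynomial, and on making the comparison lemma genuinely uniform over those families.
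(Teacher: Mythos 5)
This theorem is not proved in the present paper: it is quoted from \cite[Theorem 8.1]{GRTI}, whose argument follows \'Alvarez-C\'onsul--King. Your overall architecture agrees with that cited proof: match submodules of $\Hom(T,E)$ with the subsheaves they generate (and, conversely, saturated destabilising subsheaves with the submodules $\Hom(T,F)$), translate $p_F^\sigma\le p_E^\sigma$ into the sign of $P_F^\sigma(n)/P_E^\sigma(n)-P_F^\sigma(m)/P_E^\sigma(m)$, use Grothendieck-type boundedness for the saturated destabilisers, and for part (2) push a Jordan--H\"older filtration of $E$ through the (exact on regular sheaves) functor $\Hom(T,-)$ and use full faithfulness for the $S$-equivalence statement. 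All of that is sound.

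The genuine gap is in the direction ``$E$ semistable $\Rightarrow$ $\Hom(T,E)$ semistable''. For a submodule $M'$ you bound $\dim V_j'\le h^0(F\otimes L_j^n)$, where $F$ is the subsheaf generated by $V'$, and then replace $h^0$ by $\chi$ ``by uniform regularity''. But $\chi$ is not an upper bound for $h^0$ unless the higher cohomology of $F\otimes L_j^n$ vanishes, and this cannot be arranged: the family of such $F$ (your family (a)) is defined \emph{in terms of} $n$, so you cannot choose $n$ large afterwards to make its members $(n,\underline{L})$-regular, nor do these subsheaves have slopes bounded independently of $n$. You name this circularity as ``the main obstacle'' but offer no mechanism to break it, and boundedness of family (a) alone cannot do so. The cited proof resolves exactly this point with Le Potier--Simpson-type estimates: a uniform bound on $h^0(F\otimes L_j^n)$ valid for \emph{all} subsheaves $F\subseteq E$ simultaneously, obtained by splitting according to the maximal slope of $F$ (subsheaves of bounded maximal slope form a bounded family independent of $n$, where $h^0=\chi$ may be used; for the rest a crude dimension estimate suffices, using semistability of $E$). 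That uniform estimate is the technical heart of \cite[Section 7--8]{GRTI} and is what is missing from your argument. A secondary, smaller point: in part (2) you call the graded modules ``$\sigma$-stable by part (1)'', but part (1) as stated only concerns semistability; you need the stable analogue (which is part of the cited theorem and requires the same estimate, plus the separate treatment of submodules with $V'=0$ or generating all of $E$, where positivity of $\sigma$ provides the strict signs), so this should be proved rather than invoked.
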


\subsection{Construction of the moduli space}\label{sec:construction}
We sketch now how the above result is used to construct the moduli space we are looking for.  Assume $\sigma$ is a positive rational stability parameter, and that the set of sheaves of topological type $\tau$ that are semistable with respect to $\sigma$ is bounded.      We choose natural numbers $p,n,m \in \mathbb{N}$ such that Theorem~\ref{thm:mainsemistabilitycomparison} holds.   Moreover, by increasing $m$ if necessary, we may assume that the conclusion of Theorem~\ref{thm:categoryembedding} holds as well.  Note that by assumption, every semistable sheaf $E$ of topological type $\tau$ is  $(p,\underline{L})$-regular, and therefore also $(n, \underline{L})$- and $(m, \underline{L})$-regular. 

Recall $P_j$ denotes the Hilbert polynomial of $E$ with respect to $L_j$, where $E$ is a (any) sheaf of topological type $\tau$.  We consider the dimension vector  
$$\underline{d} = \bigl(P_1(n), P_1(m), \ldots, P_{j_0} (n), P_{j_0} (m)\bigr),$$ 
and as above let
\begin{equation}\label{eq:repvariety:repeat}
R:= \mathrm{Rep}(\mathcal{Q}, \underline{d}) =  \bigoplus\nolimits_{i,j=1}^{j_0} \mathrm{Hom}_k (k^{P_i(n)} \otimes H_{ij}, k^{P_j(m)} ) \end{equation}
be the representation space of the quiver $\mathcal{Q}$ corresponding to the dimension vector $\underline{d}$. Here, as before we have used the notation $H_{ij} = H^0(L_i^{-n}\otimes L_j^m)$.   

Let $R_{\tau}^{[n\text{-reg}]}\subset R$ be the locally closed subscheme parametrising those modules that are in the image of the $\Hom(T, -)$-functor on the category of $(n,\underline{L})$-regular sheaves, whose existence is guaranteed by Proposition~\ref{prop:identifyingtheimage}. Inside this, there is an open subscheme $Q \subset R_{\tau}^{[n\text{-reg}]}$ parametrising those modules whose associated sheaf is not only $(n,\underline{L})$- but actually $(p,\underline{L})$-regular, and a further open subscheme
$$Q^{[\sigma\text{-ss}]} \subset Q \subset  R_{\tau}^{[n\text{-reg}]}$$
parametrising representations $\Hom(T,E)$, where $E$ is $\sigma$-semistable and of topological type $\tau$. The reductive group
\begin{equation}\label{eq:def_of_group}
 G:=  \prod\nolimits_{j=1}^{j_0} \bigl(GL_k(P_j(n)) \times GL_k(P_j(m)) \bigr)
\end{equation}
acts linearly on $R$ by conjugation, and $R^{[n\text{-}reg]}_{\tau}$ and $Q^{[\sigma\text{-}ss]}$ are preserved by the $G$-action.

\begin{theorem}[Existence of moduli spaces for $\sigma$-semistable sheaves,  {\cite[Theorems 9.4 and 10.1]{GRTI}}]\label{thm:moduliexist}
Assume that $\sigma$ is a positive stability parameter such that the set of sheaves of topological type $\tau$ that are semistable with respect to $\sigma$ is bounded, and let $Z:= \overline{Q^{[\sigma\text{-ss}]}}$ be the scheme-theoretic closure in $R$.  Then,
$$ Z^{\sigma\text{-ss}} : = Z\cap R^{\sigma\text{-ss}} = Q^{[\sigma\text{-ss}]}.$$
Moreover,
$$\mathcal M_{\sigma} := Z^{\sigma\text{-ss}}\hq G$$
exists as a good quotient, and is the moduli space of sheaves of topological type $\tau$ that are semistable with respect to $\sigma$.
\end{theorem}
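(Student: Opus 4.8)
The plan is to split the statement into the scheme-theoretic identity $Z^{\sigma\text{-ss}}=Q^{[\sigma\text{-ss}]}$ and, granting it, the standard construction of the GIT quotient together with its identification with the moduli functor; the first part carries all the weight. For the inclusion $Q^{[\sigma\text{-ss}]}\subseteq Z^{\sigma\text{-ss}}$ one only needs that $Q^{[\sigma\text{-ss}]}\subseteq Z$, which is the definition of $Z$ as the closure, and that $Q^{[\sigma\text{-ss}]}\subseteq R^{\sigma\text{-ss}}$, which holds because each of its points is $\Hom(T,E)$ for a $(p,\underline L)$-regular $\sigma$-semistable sheaf $E$ of type $\tau$ and Theorem~\ref{thm:mainsemistabilitycomparison}(1) says precisely that such an $A$-module is semistable. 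Since $Z^{\sigma\text{-ss}}=Z\cap R^{\sigma\text{-ss}}$ is the closure of $Q^{[\sigma\text{-ss}]}$ inside the open subset $R^{\sigma\text{-ss}}$, the reverse inclusion is equivalent to the assertion that $Q^{[\sigma\text{-ss}]}$ is \emph{closed} in $R^{\sigma\text{-ss}}$, and this I would prove with the valuative criterion.

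So take a discrete valuation ring $\mathcal O$ with fraction field $K$ and a morphism $\mathrm{Spec}\,\mathcal O\to Z$ whose generic point lands in $Q^{[\sigma\text{-ss}]}$ and whose closed point maps to some $\xi_0\in R^{\sigma\text{-ss}}$; the task is to show $\xi_0\in Q^{[\sigma\text{-ss}]}$. Pulling back the tautological family of $A$-modules and applying Proposition~\ref{prop:identifyingtheimage}, the generic fibre is $\Hom(T,E_K)$ for a $(p,\underline L)$-regular $\sigma$-semistable sheaf $E_K$ of type $\tau$ on $X_K$. Spread $E_K$ out to an $\mathcal O$-flat coherent sheaf on $X\times\mathrm{Spec}\,\mathcal O$ and then apply the multi-Gieseker analogue of Langton's semistable-reduction theorem to obtain an $\mathcal O$-flat family $\widetilde{\mathcal E}$ with generic fibre $E_K$ and $\sigma$-semistable special fibre $E_0$; by the standing boundedness hypothesis $E_0$ is $(p,\underline L)$-regular, so $\widetilde{\mathcal E}$ is a family of $(p,\underline L)$-regular sheaves. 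Applying $\sheafHom_X(T,-)$, which commutes with base change on such families, and trivialising the resulting free family of $A$-modules over $\mathcal O$ produces a second morphism $\mathrm{Spec}\,\mathcal O\to Q^{[\sigma\text{-ss}]}$, with special value $\Hom(T,E_0)$, whose generic value is isomorphic as an $A_K$-module to $\Hom(T,E_K)$ and hence $G(K)$-conjugate to the original $K$-point. Composing both morphisms with the good quotient $R^{\sigma\text{-ss}}\to R^{\sigma\text{-ss}}\hq G$, which exists by King's theorem and is separated because it is projective over the affine $R\hq G$, the two resulting morphisms $\mathrm{Spec}\,\mathcal O\to R^{\sigma\text{-ss}}\hq G$ agree on the dense generic point and so coincide; thus $\xi_0$ and $\Hom(T,E_0)$ are $S$-equivalent $A$-modules, and in particular the Jordan--H\"older factors of $\xi_0$ are exactly the modules $\Hom(T,F_i)$ attached to the $\sigma$-stable $(p,\underline L)$-regular Jordan--H\"older factors $F_i$ of $E_0$. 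Finally, since for $m\gg n$ the full subcategory of $A$-modules of the form $\Hom(T,F)$, with $F$ a $(p,\underline L)$-regular sheaf of type $\tau$, is closed under extensions (a consequence of the uniform Serre vanishing underlying Theorems~\ref{thm:categoryembedding} and~\ref{thm:mainsemistabilitycomparison}), the module $\xi_0$ is itself of this form, i.e. $\xi_0=\Hom(T,\xi_0\otimes_A T)$ with $\xi_0\otimes_A T$ a $(p,\underline L)$-regular $\sigma$-semistable sheaf of type $\tau$; that is, $\xi_0\in Q^{[\sigma\text{-ss}]}$. (Closedness of $Q^{[\sigma\text{-ss}]}$ in $R^{\sigma\text{-ss}}$ should also follow more directly from semicontinuity of $\chi\bigl((\xi\otimes_A T)\otimes L_j^k\bigr)$ combined with the Hilbert--Mumford numerical criterion, avoiding semistable reduction altogether.)

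Granting the identity $Z^{\sigma\text{-ss}}=Q^{[\sigma\text{-ss}]}$, the rest is a standard application of GIT. As $Z$ is a $G$-invariant closed subscheme of the affine space $R$ it is affine, so with the linearisation attached to the character $\theta_\sigma$ ordinary GIT produces the good quotient $\mathcal M_\sigma:=Z^{\sigma\text{-ss}}\hq G$, projective over $Z\hq G=\mathrm{Spec}\,k[Z]^G$, and $Z^{\sigma\text{-ss}}=Z\cap R^{\sigma\text{-ss}}$ is precisely the $\theta_\sigma$-GIT-semistable locus of $Z$ by King's theorem. To identify $\mathcal M_\sigma=Q^{[\sigma\text{-ss}]}\hq G$ with the moduli space, I would note that $\mathscr U|_{Q^{[\sigma\text{-ss}]}}\otimes_{\mathscr A}T$ is, by Proposition~\ref{prop:identifyingtheimage}, a local universal family of $(p,\underline L)$-regular $\sigma$-semistable sheaves of type $\tau$: any flat family of such sheaves over a base $S$ is, after trivialising $\sheafHom_X(T,-)$ on an open cover and observing that two trivialisations differ by a morphism to $G$, pulled back from it, so the induced local maps $S_\alpha\to Q^{[\sigma\text{-ss}]}$ composed with the $G$-invariant quotient morphism glue to a morphism $S\to\mathcal M_\sigma$; the resulting natural transformation from the moduli functor to $\mathrm{Hom}(-,\mathcal M_\sigma)$ is universal because $Q^{[\sigma\text{-ss}]}\to\mathcal M_\sigma$ is a categorical quotient. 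For the closed points one combines the facts that closed $G$-orbits in $Q^{[\sigma\text{-ss}]}$ are the polystable modules, that two orbits are identified in the quotient exactly when their closures meet, and that via the full faithfulness of $E\mapsto\Hom(T,E)$ (Theorem~\ref{thm:categoryembedding}) together with $\Hom(T,\mathrm{gr}\,E)\cong\mathrm{gr}\,\Hom(T,E)$ (Theorem~\ref{thm:mainsemistabilitycomparison}(2), which is where positivity of $\sigma$ enters) this matches $S$-equivalence of the underlying sheaves; hence the closed points of $\mathcal M_\sigma$ are the $S$-equivalence classes of $\sigma$-semistable sheaves of type $\tau$.

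The main obstacle is the inclusion $Z^{\sigma\text{-ss}}\subseteq Q^{[\sigma\text{-ss}]}$: it relies on a semistable-reduction statement in the multi-Gieseker setting, on the boundedness hypothesis to force $(p,\underline L)$-regularity of the limit sheaf, on a slightly delicate passage through the separated $G$-invariant GIT quotient that links the a priori uncontrolled limit module $\xi_0$ to the honest image $\Hom(T,E_0)$, and finally on the extension-closedness of the image of the embedding functor, which is what upgrades this $S$-equivalence to actual membership in $Q^{[\sigma\text{-ss}]}$.
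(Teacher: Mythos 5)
There is a genuine gap, and it sits exactly at the delicate point of the theorem. Your reduction to closedness of $Q^{[\sigma\text{-ss}]}$ under specialisation in $R^{\sigma\text{-ss}}$, and the Langton-plus-separatedness step showing that the limit module $\xi_0$ is $S$-equivalent to $\Hom(T,E_0)$ for a $\sigma$-semistable, $(p,\underline L)$-regular sheaf $E_0$, are sound (granting a multi-Gieseker Langton theorem, which is a nontrivial input you assume; it is available in \cite{GRTI}, from which the present theorem is quoted without proof). The failure is the final upgrade from $S$-equivalence to membership in $Q^{[\sigma\text{-ss}]}$: the full subcategory of modules of the form $\Hom(T,F)$, $F$ regular, is \emph{not} closed under extensions, and no ``uniform Serre vanishing'' makes it so. Since the path algebra $A$ is hereditary, $\Ext^1_A\bigl(\Hom(T,F''),\Hom(T,F')\bigr)$ is the cokernel of a map into $\bigoplus_{i,j}\Hom_k\bigl(H^0(F''\otimes L_i^n)\otimes H_{ij},\,H^0(F'\otimes L_j^m)\bigr)$, whose dimension grows like $\sum_{i,j}P_i(n)P_j(m)\dim H_{ij}$, whereas $\Ext^1_X(F'',F')$ is independent of $n,m$ and the whole image locus $R^{[n\text{-reg}]}_{\tau}$ (Proposition~\ref{prop:identifyingtheimage}) has dimension bounded by $\dim G$ plus a constant coming from boundedness of the regular family. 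Concretely, the block-upper-triangular representations with diagonal $\bigl(\Hom(T,F'),\Hom(T,F'')\bigr)$ form an affine space of strictly larger dimension than the image locus, and its semistable part is nonempty open (the split extension is semistable), so most semistable extensions of image-modules are not image-modules. Hence your valuative-criterion argument breaks down precisely for non-polystable limits $\xi_0$: knowing that the Jordan--H\"older factors of $\xi_0$ are of the form $\Hom(T,F_i)$ does not place $\xi_0$ in $Q^{[\sigma\text{-ss}]}$. Your parenthetical alternative (semicontinuity of $\chi$ plus Hilbert--Mumford) does not repair this. That this is the real issue is also reflected in the paper's remark following Theorem~\ref{thm:master}: the analogous equality fails for degenerate parameters exactly because this kind of orbit-closure control is unavailable there.

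The repair, which is essentially the mechanism behind the cited proof in \cite{GRTI} (see the sketch after the statement: ``enough control on the closure of $G$-orbits'' via Theorem~\ref{thm:mainsemistabilitycomparison}(2)), is to run your Langton argument only at \emph{closed} orbits and then use the good-quotient topology instead of pointwise specialisation. If $G\xi_0$ is closed in $Z^{\sigma\text{-ss}}$, then $\xi_0$ is polystable, so $\xi_0\cong \mathrm{gr}\,\xi_0\cong \Hom(T,\mathrm{gr}\,E_0)\in Q^{[\sigma\text{-ss}]}$ with no extension argument needed (this is where positivity of $\sigma$ enters through Theorem~\ref{thm:mainsemistabilitycomparison}(2), and where boundedness gives $(p,\underline L)$-regularity of $\mathrm{gr}\,E_0$). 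Next, $Q^{[\sigma\text{-ss}]}$ is open and $G$-invariant in $Z^{\sigma\text{-ss}}$, and since $Z^{\sigma\text{-ss}}$ is closed and invariant in $R^{\sigma\text{-ss}}$, every orbit closure in $Z^{\sigma\text{-ss}}$ contains a unique closed orbit (King's good quotient $R^{\sigma\text{-ss}}\to R^{\sigma\text{-ss}}\hq G$); an open invariant subset containing all closed orbits is therefore all of $Z^{\sigma\text{-ss}}$, giving the set-theoretic equality, and scheme-theoretic equality follows because both $Q^{[\sigma\text{-ss}]}$ and $Z^{\sigma\text{-ss}}$ are open subschemes of $Z$ with the same support. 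With that replacement, the remaining parts of your write-up (identification $Z^{\theta_\sigma\text{-ss}}=Z\cap R^{\sigma\text{-ss}}$ by surjectivity of restriction of semi-invariants, existence of the good quotient, and the local-universal-family identification of $Z^{\sigma\text{-ss}}\hq G$ with the moduli space of $S$-equivalence classes) are standard and agree with the construction recalled from \cite{GRTI} and \cite{ConsulKing}.
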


The proof of Theorem~\ref{thm:moduliexist} rests on our comparison between stability of a sheaf $E$ and stability of the corresponding $A$-module $\Hom(T,E)$. Since we are assuming $\sigma$ to be positive, Theorem \ref{thm:mainsemistabilitycomparison}(2) also allows us to compare $S$-equivalence classes before and after the categorical embedding. This gives us enough control on the closure of $G$-orbits to ensure that the good quotient $Z^{\sigma\text{-ss}}\hq G$ exists. We think of the quotient $Z^{\sigma\text{-ss}}\hq G$ as being embedded inside the good quotient $R^{\sigma\text{-ss}}\hq G$ of semistable $A$-modules constructed by King. We refer the reader to \cite[Section 9]{GRTI} for a more detailed discussion, in particular, for a description of the moduli functor that $\mathcal M_{\sigma}$ corepresents.

\begin{remark}
The above discussion requires that each $\sigma_j$ is rational.  However, we show in \cite[Corollary 4.4]{GRTI} that, in the case of torsion-free sheaves on an integral scheme, this is enough to also prove the existence of $\mathcal M_{\sigma}$ even if some of the $\sigma_j$ are irrational.
\end{remark}

\section{Degenerate stability parameters}\label{sec:degenerate}

\subsection{Setup} \label{sect:setup}
Let $\sigma = (\sigma_1,\ldots,\sigma_{j_0})\in \mathbb Q_{\ge 0}^{j_0}\setminus\{0\}$.   In the previous section we sketched the construction of the moduli space $\mathcal M_{\sigma}$ when $\sigma$ is positive. When $\sigma$ is degenerate, in order to show the existence of the desired moduli space, suppose we
ignore all those $j$ such that $\sigma_j=0$. Since this does not change the (multi-Gieseker-)stability condition on sheaves we obtain in this way the moduli space $\mathcal{M}_\sigma$, but at a cost of modifying the parameter space used in the GIT construction. In particular, this approach does not directly yield the desired master space that can be used for both positive and degenerate stability conditions at the same time. For this reason we instead take a closely related but different approach. 

As above, $X$ is a projective scheme over $k$, and $L_1,\ldots,L_{j_0}$ are very ample line bundles on $X$.   We also fix a topological type $\tau$.    Renaming indices if necessary, we may suppose there is a $j'<j_0$ such that $\sigma_j>0$ for all $j\le j'$ and $\sigma_j=0$ for all $j>j'$. Consider the truncation
$$\sigma' := (L_1,\ldots, L_{j'}, \sigma_1,\ldots, \sigma_{j'}),$$
which is a positive stability parameter, and set $\underline{L}'= (L_1, \dots, L_{j'})$.    Our aim is to compare the GIT setup for $\sigma$ with that for $\sigma'$.

 To this end,  let $\mathcal{Q}$ be the quiver with vertices $\{v_i,w_i \mid i,j=1,\ldots, j_0\}$ discussed in Section \ref{sec:quiver}, and let $\mathcal{Q}'$ be the full subquiver with vertices $\{v_i, w_i\mid i,j=1,\ldots,j'\}$.  Pictorially, we think of $\mathcal{Q}'$ as being the top $j'$ rows of $\mathcal{Q}$ together with the induced arrows.     As always, we assume the set of sheaves of topological type $\tau$ that are semistable with respect to $\sigma$ is bounded, and we also choose integers $m\gg n\gg p\gg 0$ so that Theorem \ref{thm:categoryembedding} and Theorem \ref{thm:mainsemistabilitycomparison}(1) hold.    By increasing these integers if necessary, we can assume that Theorem \ref{thm:categoryembedding}  and Theorem \ref{thm:mainsemistabilitycomparison} also hold for the positive stability parameter $\sigma'$.

Now let $T$, $A$, $\underline{d}$, $R$, $Q$, $G$ be the objects associated with $\mathcal{Q}$ defined in Section \ref{sec:prelim}, and let $T'$, $A'$, $\underline{d}'$, $R'$, $Q'$, $G'$ be the corresponding objects associated with $\mathcal{Q}'$.       Thus, for a sheaf $E$, the $A$-module $\Hom(T,E)$ is a representation of $\mathcal{Q}$ and the $A'$-module $\Hom(T',E')$ is a representation of $\mathcal{Q}'$.    If $E$ is $(n,\underline{L})$-regular of topological type $\tau$, then $\Hom(T,E)$ and $\Hom(T',E)$ have dimension vector $\underline{d}$ and $\underline{d}'$ respectively, and so yield elements of $R=\mathrm{Rep}(\mathcal{Q},\underline{d})$ and $R'= \mathrm{Rep}(\mathcal{Q}',\underline{d}')$, respectively.  

\begin{definition}[Projection induced by subquiver]
We let
$$ \pi\colon R\to R'$$
be the projection induced from the inclusion $\mathcal{Q}'\subset \mathcal{Q}$.  
\end{definition}
Pictorially, $\pi$ can be thought of as taking a representation of $\mathcal{Q}$ and throwing away all but the top $j'$ rows; so, when $j_0=3$ and $j'=2$, this looks as follows:

$$\begin{array}{ccc}
\begin{xymatrix}
{ V_1 \ar[rrrrr]|<<<<<<<<<<<<{H_{11}}\ar[rrrrrdd]|<<<<<<<<<<<<{H_{12}}\ar[rrrrrdddd]|<<<<<<<<<<<<{H_{13}}&  & & &  &W_1 \\
                                        &  &  & &  &     \\
  V_2 \ar[rrrrruu]|<<<<<<<<<<<<{H_{21}} \ar[rrrrr]|<<<<<<<<<<<<{H_{22}} \ar[rrrrrdd]|<<<<<<<<<<<<{H_{23}}& & & & & W_2 \\
                            &   & &  &   &    \\
  V_3 \ar[rrrrruuuu]|<<<<<<<<<<<<{H_{31}} \ar[rrrrr]|<<<<<<<<<<<<{H_{33}} \ar[rrrrruu]|<<<<<<<<<<<<{H_{32}}& & & & &W_3   
}
  \end{xymatrix}
 & \begin{xymatrix}
    {  \\ \\ \quad \stackrel{\pi}{\longmapsto} \quad\\ \\ \\ }
   \end{xymatrix}
 & 
\begin{xymatrix}
{   & & &   \\ V_1 \ar[rrr]|<<<<<<<<<<<<{H_{11}}\ar[rrrdd]|<<<<<<<<<<{H_{12}}&   &  &W_1 \\
                                        &   &  &     \\
  V_2 \ar[rrruu]|<<<<<<<<<<<<{H_{21}} \ar[rrr]|<<<<<<<<<<{H_{22}} &  & & W_2.
}
  \end{xymatrix}
 
\end{array}$$
\begin{remark}[Kronecker quivers as subquivers of $\mathcal{Q}$]
Note that by setting all but one of the rows to zero we obtain the Kronecker quiver used  by \'Alvarez-C\'onsul and King \cite{ConsulKing} in their construction of the moduli space of Gieseker-semistable sheaves. This observation is our original motivation to study degenerate stability parameters.
\end{remark}

As before, let $\theta_{j1}$ and $\theta_{j2}$ be given by
\begin{equation}
 \theta_{j1} := \frac{\sigma_j}{\sum_{i=1}^{j_0} \sigma_{i} d_{i1}} \quad \text{ and } \quad \theta_{j2} := \frac{- \sigma_j}{\sum_{i=1}^{j_0} \sigma_{i} d_{i2}} \quad \text{ for }j= 1, \ldots, j_0,
\end{equation}
and for an $A$-module $M = \bigoplus_j V_j\oplus W_j$ set
\begin{equation}\label{eq:bigthetadef}
 \theta_\sigma(M) := \sum\nolimits_{j} \theta_{j1} \dim V_j + \sum\nolimits_{j} \theta_{j2} \dim W_j.
\end{equation}
So, if $M$ is an $A$-module of dimension vector $\underline{d}$, then $\theta_{\sigma}(M)=0$.  Observe that if we define $\theta'_{j1}$ and $\theta'_{j2}$ in an analogous way, with $\sigma$ replaced by $\sigma'$ and $j_0$ replaced by $j'$, then $\theta'_{ji}=\theta_{ji}$ for all $j\le j'$ and $i=1,2$.    Thus, if $M$ is an $A'$-module of dimension vector $\underline{d}'$, then $\theta_{\sigma'}(M)=0$.

Write $G:=  \prod\nolimits_{j=1}^{j_0} \bigl(GL_k(P_j(n)) \times GL_k(P_j(m)) \bigr)$ and observe that $G$ acts on $R$ as $G=G'\times G''$, where $G'$ is the product over those $j$ up to $j'$, and $G''$ is the product over the remaining indices. The projection $\pi\colon R\to R'$ is $G$-equivariant, where $G'' < G$ acts trivially on $R'$. We now let

$$Z:= \overline{Q^{[\sigma\text{-ss}]}}\subset R\text{ and } Z^{\sigma\text{-ss}} := Z \cap R^{\sigma\text{-ss}},$$ 
and similarly
  $$Z':=\overline{Q'^{[\sigma'\text{-ss}]}} \subset R' \text{ and } Z'^{\,\sigma'\text{-ss}} := Z' \cap R'^{\,\sigma'\text{-ss}}.$$

\subsection{The main technical result}
 By Lemma \ref{lem:exerciseschmitt}, the existence of a good quotient for the $G$-action on $Z^{\sigma\text{-ss}}$ implies the existence of a good quotient $Z^{\sigma\text{-ss}}\hq G''$ for the induced action of $G''$, and our main technical result is to identify this quotient.
\begin{proposition}[Compatibility of the projection with GIT and embedding functors]\label{prop:maintechinical}
The projection $\pi\colon R\to R'$ induces a $G'$-equivariant, $G''$-invariant map $$\pi\colon Z^{\sigma\text{-ss}} \to Z'^{\,\sigma'\text{-ss}}.$$ Moreover, the induced map
$$\hat{\pi} \colon Z^{\sigma\text{-ss}}\hq G'' \to Z'^{\,\sigma'\text{-ss}}$$ is a $G'$-equivariant isomorphism.
\end{proposition}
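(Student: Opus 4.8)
I would prove Proposition~\ref{prop:maintechinical} in three stages: first show $\pi$ actually maps $Z^{\sigma\text{-ss}}$ into $Z'^{\,\sigma'\text{-ss}}$ (not merely into $R'$), then show $\pi$ descends to a bijective morphism $\hat\pi$ on the quotient, and finally upgrade ``bijective morphism'' to ``isomorphism'' using the good-quotient structure. The equivariance statements ($\pi$ is $G'$-equivariant and $G''$-invariant) are immediate from the construction of $\pi$ as the subquiver projection and the decomposition $G=G'\times G''$, so these I would simply remark on at the start.

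\emph{Stage 1: $\pi$ lands in $Z'^{\,\sigma'\text{-ss}}$.} The key point is the identity $\theta'_{ji}=\theta_{ji}$ for $j\le j'$ observed in the setup, together with the fact (from the definition of $\theta_\sigma$ with $\sigma_j=0$ for $j>j'$) that $\theta_\sigma(M)$ depends only on the ``$\mathcal{Q}'$-part'' $\pi(M)$ of a module $M$. Concretely, for any $A$-module $M$ with dimension vector $\underline d$, one has $\theta_\sigma(M)=\theta_{\sigma'}(\pi M)$ where I abusively write $\pi M$ for the underlying $A'$-module; and a submodule $M'\subset M$ projects to a submodule of $\pi M$, while conversely any submodule of $\pi M$ lifts (by taking full $V_j,W_j$ in the discarded rows $j>j'$) to a submodule of $M$ with the same $\theta$-value. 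Hence $M$ is $\sigma$-semistable \emph{if and only if} $\pi M$ is $\sigma'$-semistable — this gives $\pi(R^{\sigma\text{-ss}})\subset R'^{\,\sigma'\text{-ss}}$. For the closure constraint: an element of $Q^{[\sigma\text{-ss}]}$ is $\Hom(T,E)$ for a $\sigma$-semistable $E$ of type $\tau$, which is $(p,\underline L)$-regular hence $(n,\underline L)$-regular, so in particular $(n,\underline L')$-regular; and because the stability condition is unchanged, $E$ is also $\sigma'$-semistable. By the compatibility of the embedding functor with restriction of $T$ to $T'$ (one has $\Hom(T',E)=\pi(\Hom(T,E))$ essentially by definition of $H_{ij}$ and of the multiplication maps), $\pi(\Hom(T,E))=\Hom(T',E)\in Q'^{[\sigma'\text{-ss}]}$. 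Thus $\pi(Q^{[\sigma\text{-ss}]})\subset Q'^{[\sigma'\text{-ss}]}$, and by continuity $\pi(Z)=\pi(\overline{Q^{[\sigma\text{-ss}]}})\subset \overline{Q'^{[\sigma'\text{-ss}]}}=Z'$; intersecting with the semistable locus and using the first sentence of this stage gives $\pi(Z^{\sigma\text{-ss}})\subset Z'^{\,\sigma'\text{-ss}}$.

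\emph{Stage 2: $\hat\pi$ is a bijection on points.} By Lemma~\ref{lem:exerciseschmitt} the quotient $Z^{\sigma\text{-ss}}\hq G''$ exists (since $Z^{\sigma\text{-ss}}\hq G$ does, by Theorem~\ref{thm:moduliexist}) and since $\pi$ is $G''$-invariant it factors through a morphism $\hat\pi$. Over closed points, both sides are identified with isomorphism classes of the relevant objects: by Theorem~\ref{thm:moduliexist} (applied to $\sigma'$) and Theorem~\ref{thm:categoryembedding}, closed points of $Z'^{\,\sigma'\text{-ss}}\hq G'$ correspond to $S$-equivalence classes of $\sigma'$-semistable sheaves of type $\tau$. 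The set-theoretic content is that every $\sigma$-semistable $A$-module occurring in $Z^{\sigma\text{-ss}}$ is (on a dense open, then everywhere by the ``$Z=$ closure of $Q^{[\sigma\text{-ss}]}$'' identity in Theorem~\ref{thm:moduliexist}) of the form $\Hom(T,E)$, and two such have the same image under $\pi$ up to $G'$ iff the sheaves are isomorphic — here I would use that $E\mapsto\Hom(T',E)=\pi\Hom(T,E)$ is fully faithful on $(n,\underline L')$-regular sheaves, so $E$ is recovered from $\pi\Hom(T,E)$ via $M'\mapsto M'\otimes_{A'}T'$. This pins down $\hat\pi$ as a bijection; surjectivity onto $Z'^{\,\sigma'\text{-ss}}$ follows because every point of $Z'^{\,\sigma'\text{-ss}}$ is $\Hom(T',E)$ for a $\sigma'$-semistable $E$ which is also $\sigma$-semistable (same condition!) and $(n,\underline L)$-regular (after enlarging $n$ if needed, which was arranged in the setup), whence $\Hom(T,E)\in Q^{[\sigma\text{-ss}]}$ maps to it.

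\emph{Stage 3: isomorphism.} A bijective morphism of schemes of finite type over an algebraically closed field of characteristic zero need not be an isomorphism in general, so I would instead construct the inverse directly, and this is the step I expect to be the real obstacle. The natural candidate is the morphism $Z'^{\,\sigma'\text{-ss}}\to Z^{\sigma\text{-ss}}\hq G''$ sending an $A'$-module $M'$ to (the class of) $\Hom(T, M'\otimes_{A'}T')$; to define this as a \emph{morphism} one wants to produce, over $Z'^{\,\sigma'\text{-ss}}$ or over an \'etale/fppf cover, the family $\sheafHom_X(T, \mathscr M'\otimes_{\mathscr A'}T')$ of $A$-modules, where $\mathscr M'$ is the tautological family on $Z'^{\,\sigma'\text{-ss}}$, invoke Proposition~\ref{prop:identifyingtheimage} to see it factors through $R_\tau^{[n\text{-reg}]}\supset Q^{[\sigma\text{-ss}]}$, and check the two composites are identities using the unit/counit isomorphisms $\eta,\varepsilon$ of the adjunction. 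The bookkeeping with trivialisations and the $G$- versus $G''$-action (a trivialisation changes the classifying map by a morphism to $G$, and we only quotient by $G''$, so the $G'$-part must be tracked — this is exactly what makes $\hat\pi$ only $G'$-equivariant rather than an isomorphism onto a $G'$-quotient) is where care is needed. An alternative, possibly cleaner route: show $\pi\colon Z^{\sigma\text{-ss}}\to Z'^{\,\sigma'\text{-ss}}$ is itself a (trivial, or at least locally trivial) $G''$-bundle — indeed fibrewise $\pi^{-1}(\Hom(T',E))$ consists of all ways of completing the $\mathcal{Q}'$-representation $\Hom(T',E)$ to the $\mathcal{Q}$-representation $\Hom(T,E)$, and since $E$ is determined (as $\Hom(T',E)\otimes_{A'}T'$) the only freedom is the basis choices in the discarded vertices, i.e.\ a $G''$-torsor — so that $\pi$ is a geometric quotient by $G''$, and then $\hat\pi$ is an isomorphism by uniqueness of good quotients. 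I would aim to make this torsor statement precise (trivial after an \'etale base change, using local freeness of the cohomology sheaves $\mathscr W_j$ for $j>j'$ on the regular locus) — that is the crux of the argument.
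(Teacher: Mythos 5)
The serious problem is that both your Stage 2 and the ``cleaner'' alternative in Stage 3 rest on the claim that every point of $Z^{\sigma\text{-ss}}$ is of the form $\Hom(T,E)$ for a semistable sheaf $E$ — equivalently, that the fibre of $\pi$ over $\Hom(T',E)$ is a single $G''$-orbit, i.e.\ that $\pi$ is a $G''$-torsor. This is exactly what fails for a degenerate $\sigma$: the identity $Z^{\sigma\text{-ss}}=Q^{[\sigma\text{-ss}]}$ of Theorem~\ref{thm:moduliexist} requires $\sigma$ positive, and positivity is precisely what is absent here (otherwise the proposition would be unnecessary). Concretely, $\sigma$-semistability of an $A$-module of dimension vector $\underline{d}$ only constrains its $\mathcal{Q}'$-part, since $\theta_{j1}=\theta_{j2}=0$ for $j>j'$; so a one-parameter subgroup of $G''$ scaling the spaces at the discarded vertices degenerates the maps into those vertices while staying inside $\overline{Q^{[\sigma\text{-ss}]}}\cap R^{\sigma\text{-ss}}$, producing points in the same fibre of $\pi$ as $\Hom(T,E)$ but in a different, non-closed $G''$-orbit. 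Hence $\pi\colon Z^{\sigma\text{-ss}}\to Z'^{\,\sigma'\text{-ss}}$ is a good but not a geometric quotient by $G''$, the torsor statement you call ``the crux'' is false, and the pointwise bijectivity of Stage 2 collapses with it (the paper's remark after Theorem~\ref{thm:master} records exactly this failure of saturatedness). What does work — and is the paper's actual argument — is your primary Stage 3 plan: construct $\hat s\colon Z'^{\,\sigma'\text{-ss}}=Q'^{[\sigma'\text{-ss}]}\to Z^{\sigma\text{-ss}}\hq G''$ from the family $\mathscr{N}=\sheafHom_X(T,\mathcal{E}')$ with $\mathcal{E}'=\iota^*\mathscr{U}'\otimes_{\mathscr{A}'}T'$. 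The point you leave open, the trivialisation bookkeeping, is resolved by noting that the $\mathcal{Q}'$-part of $\mathscr{N}$ is \emph{canonically} trivialised by the unit isomorphism $\iota^*\mathscr{U}'\cong\sheafHom_X(T',\mathcal{E}')$, so only the bundles at the discarded vertices need local trivialisations; any two such differ by a morphism into $G''$ and therefore induce the same map to the $G''$-quotient, so the local maps glue. One then verifies $\hat\pi\circ\hat s=\id$ in families and that the scheme-theoretic image of $\pi$ is all of $Z'^{\,\sigma'\text{-ss}}$, rather than appealing to a fibrewise torsor structure.

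There is also a smaller slip in Stage 1: your lift of a submodule $M'\subset\pi M$ obtained by taking the \emph{full} spaces at the discarded vertices is not a submodule of $M$, because the arrows $\alpha_{ij}$ with $i>j'$ and $j\le j'$ send the full $V_i\otimes H_{ij}$ into $W_j$, not into $W_j'$. The correct lift puts the zero space at the discarded $v$-vertices (and, say, the full $W_j$ at the discarded $w$-vertices); since $\sigma_j=0$ for $j>j'$ this has the same $\theta$-value, so the needed conclusion $\pi(R^{\sigma\text{-ss}})\subset R'^{\,\sigma'\text{-ss}}$ stands. Finally, the inclusion $\pi(Z^{\sigma\text{-ss}})\subset Z'^{\,\sigma'\text{-ss}}$ should be established scheme-theoretically (via Proposition~\ref{prop:identifyingtheimage} applied to the tautological families, the identity $\pi(\mathscr{U})=\pi^*\mathscr{U}'$, and a scheme-theoretic image argument for the closures), not merely ``by continuity'' on points, though this part is routine.
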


As a consequence of the previous result,  we can show the moduli space $\mathcal M_{\sigma}$ of $\sigma$-multi-Gieseker semistable sheaves can be obtained as a GIT quotient from the $G$-space $Z$.  This statement will eventually lead to the desired master space construction for any finite number of stability parameters.

\begin{corollary}[Obtaining moduli of multi-Gieseker semistable sheaves from $Z$]\label{cor:obtaining}
With the above notations, we have
$$ \mathcal M_{\sigma} \cong Z^{\sigma\text{-ss}}\hq G.$$
\end{corollary}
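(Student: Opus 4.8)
The plan is to combine Proposition~\ref{prop:maintechinical} with Theorem~\ref{thm:moduliexist} applied to the \emph{positive} stability parameter $\sigma'$. First I would recall that since $\sigma$ is (possibly) degenerate, the definition of multi-Gieseker-stability depends only on the non-zero components of $\sigma$, and hence a sheaf of topological type $\tau$ is $\sigma$-semistable if and only if it is $\sigma'$-semistable. In particular the moduli functors for $\sigma$ and $\sigma'$ coincide, so by Theorem~\ref{thm:moduliexist} applied to $\sigma'$ (which is positive, and for which the relevant boundedness holds since the semistable sheaves are the same) we already have $\mathcal M_{\sigma} = \mathcal M_{\sigma'} \cong Z'^{\,\sigma'\text{-ss}} \hq G'$ as a good quotient. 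It therefore suffices to identify $Z^{\sigma\text{-ss}}\hq G$ with $Z'^{\,\sigma'\text{-ss}} \hq G'$.

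Next I would invoke Lemma~\ref{lem:exerciseschmitt} with the splitting $G = G'\times G''$: since $Z^{\sigma\text{-ss}}\hq G$ exists as a good quotient (this is part of the content of Theorem~\ref{thm:moduliexist}, whose proof shows the orbit-closure behaviour needed for $\sigma$ is good enough — but here I must be slightly careful, see below), the lemma gives that $Z^{\sigma\text{-ss}}\hq G''$ exists, carries a residual $G'$-action, and
$$Z^{\sigma\text{-ss}}\hq G \;\cong\; \bigl(Z^{\sigma\text{-ss}}\hq G''\bigr)\hq G'.$$
Now Proposition~\ref{prop:maintechinical} provides a $G'$-equivariant isomorphism $\hat\pi\colon Z^{\sigma\text{-ss}}\hq G'' \xrightarrow{\;\sim\;} Z'^{\,\sigma'\text{-ss}}$. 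Feeding this into the displayed isomorphism and using that a good quotient is unique up to canonical isomorphism (so it is transported along $\hat\pi$), I get
$$Z^{\sigma\text{-ss}}\hq G \;\cong\; Z'^{\,\sigma'\text{-ss}}\hq G' \;\cong\; \mathcal M_{\sigma'} \;=\; \mathcal M_{\sigma},$$
which is the claim.

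The main obstacle — and the place where I would need to argue most carefully — is the very first logical step: establishing that the good quotient $Z^{\sigma\text{-ss}}\hq G$ actually exists when $\sigma$ is degenerate, so that Lemma~\ref{lem:exerciseschmitt} can be applied. Theorem~\ref{thm:moduliexist} as stated assumes $\sigma$ positive, and its proof uses Theorem~\ref{thm:mainsemistabilitycomparison}(2), which is only available in the positive case; so I cannot simply quote it. The cleanest route is to turn the argument around: rather than assuming existence of $Z^{\sigma\text{-ss}}\hq G$ up front, first use Proposition~\ref{prop:maintechinical} to get the $G''$-quotient and identify it with $Z'^{\,\sigma'\text{-ss}}$, on which $G'$ acts with an \emph{existing} good quotient $\mathcal M_{\sigma'}$ by Theorem~\ref{thm:moduliexist}; then the composite $Z^{\sigma\text{-ss}}\to Z'^{\,\sigma'\text{-ss}} \to \mathcal M_{\sigma'}$ is $G$-invariant, and one checks directly (using that $\hat\pi$ is an isomorphism and that a composite of good quotients is a good quotient, cf.\ the iterated-quotient direction of Lemma~\ref{lem:exerciseschmitt}) that this composite \emph{is} a good quotient for the $G$-action on $Z^{\sigma\text{-ss}}$. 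This simultaneously proves existence of $Z^{\sigma\text{-ss}}\hq G$ and its identification with $\mathcal M_{\sigma}$, avoiding any appeal to the positive-case orbit-closure analysis for $\sigma$ itself. A secondary, more routine point to verify is that the boundedness hypothesis in force for $\sigma$ indeed implies the one needed to apply Theorem~\ref{thm:moduliexist} to $\sigma'$ — but this is immediate since the $\sigma$- and $\sigma'$-semistable sheaves of type $\tau$ form the same set.
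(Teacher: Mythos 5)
Your argument is correct and is essentially the paper's own: observe that $\sigma$- and $\sigma'$-semistability of sheaves coincide so that $\mathcal M_{\sigma}=\mathcal M_{\sigma'}=Z'^{\,\sigma'\text{-ss}}\hq G'$ by Theorem~\ref{thm:moduliexist} applied to the positive truncation, then quotient $Z^{\sigma\text{-ss}}$ by $G=G'\times G''$ in stages via Lemma~\ref{lem:exerciseschmitt} and identify the intermediate quotient using Proposition~\ref{prop:maintechinical}. The existence issue you flag is legitimate but is settled more directly than by your rerouted construction: $Z^{\sigma\text{-ss}}=Z\cap R^{\sigma\text{-ss}}$ is a closed $G$-invariant subscheme of King's GIT-semistable locus $R^{\sigma\text{-ss}}$ in the affine space $R$, whose good quotient by $G$ exists for any character $\theta_\sigma$ regardless of degeneracy (positivity of $\sigma$ is needed only for the sheaf-theoretic identification of orbit closures via Theorem~\ref{thm:mainsemistabilitycomparison}(2)), so $Z^{\sigma\text{-ss}}\hq G$ exists and Lemma~\ref{lem:exerciseschmitt} applies exactly as in the paper.
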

\begin{proof}
 Observe first that (semi)stability of a sheaf $E$ with respect to $\sigma$ is precisely the same as (semi)stability with respect to $\sigma'$; i.e., the moduli functors coincide and for the corresponding moduli spaces we have $\mathcal M_{\sigma} = \mathcal M_{\sigma'}$.   The construction of $\mathcal M_{\sigma'}$ described in the  Section \ref{sec:construction}, which applies as $\sigma'$ is positive, says $\mathcal M_{\sigma'} = Z'^{\,\sigma'\text{-ss}}\hq G'$.  Thus,  taking the quotient of $Z^{\sigma\text{-ss}}$ by $G =  G' \times G''$ in stages, by Lemma \ref{lem:exerciseschmitt} and Proposition~\ref{prop:maintechinical} leads to the following sequence of isomorphisms
$$ Z^{\sigma\text{-ss}}\hq G \cong (Z^{\sigma\text{-ss}}\hq G'') \hq G' \cong Z'^{\,\sigma'-ss} \hq G' \cong \mathcal M_{\sigma'}=\mathcal M_{\sigma}$$
and therefore establishes the desired result.
\end{proof}

\section{Proof of the main technical result} 
In order to get a preliminary idea from the GIT-part of the theory as to why the result we are claiming is true, observe that taking the classical invariant-theoretic quotient of $R$ by $G''$ yields $R'$.  Moreover, as the character $\theta_{\sigma'}$ of $G$ computed from $\sigma'$ is really a character of $G'$, i.e., $\theta_{\sigma'}$ is trivial on $G''$, the semistable set $R^{\sigma\text{-ss}}$ is saturated with respect to the good quotient $\pi: R \to R\hq G'' \cong R'$. Hence, the situation is quite clear for the relevant quiver-representations.  What remains is proving that this geometric picture restricts well to the subschemes parametrising representations coming from (semistable regular) sheaves.

The proof of Proposition~\ref{prop:maintechinical} is subdivided into two steps: in Section~\ref{subsect:firsthalf} we prove the first assertion, in Section~\ref{subsect:secondhalf} we establish the second.

\subsection{Proving the first statement of Proposition~\ref{prop:maintechinical}}\label{subsect:firsthalf}
In this section, we investigate the relation between the equivariant geometry of $\pi$ and the categorical embedding provided by Theorem~\ref{thm:categoryembedding}.

We recall from Section~\ref{sec:construction} the definition of the subschemes $Q \subset R$ and $Q' \subset R'$: the first one parametrises those $A$-modules that are in the image of the $\Hom(T, -)$-functor on the category of $(p,\underline{L})$-regular sheaves, the second one those $A'$-modules that are in the image of the $\Hom(T', -)$-functor on the category of $(p,\underline{L}')$-regular sheaves. Inside these there are open subschemes $Q^{[\sigma\text{-ss}]}$ and $Q'^{[\sigma'\text{-ss}]}$ parametrising $\sigma$- and $\sigma'$-semistable sheaves, respectively.  The following result shows that these subschemes are preserved by the projection $\pi\colon R\to R'$.

\begin{lemma}\label{lem:pinducesQ}
The restriction of $\pi\colon R\to R'$ to $Q$ and $Q^{[\sigma\text{-ss}]}$, respectively, induces morphisms 
\begin{equation*}
\pi\colon Q \to Q'\text{ and } \pi\colon Q^{[\sigma\text{-ss}]}  \to  Q'^{[\sigma'\text{-ss}]}.
\end{equation*}
\end{lemma}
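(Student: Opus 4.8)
The plan is to reduce the statement to the tautological observation that, on the sheaf side, the projection $\pi$ corresponds to replacing the sheaf $T$ by its truncation $T'=\bigoplus_{j=1}^{j'}L_j^{-n}\oplus L_j^{-m}$, and then to read off the two factorisations from the universal property of $Q'$ recorded in Proposition~\ref{prop:identifyingtheimage}.

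I would begin by recording three elementary facts. First, since the tautological families $\mathscr{U}$ over $R$ and $\mathscr{U}'$ over $R'$ are given by the explicit formulas of Section~\ref{sec:families} and $\pi$ is a linear projection, $\pi^*\mathscr{U}'$ is canonically the family of $\mathcal{Q}'$-representations obtained from $\mathscr{U}$ by deleting all rows of index $>j'$; equivalently, $\pi\bigl(\Hom(T,E)\bigr)=\Hom(T',E)$ for every coherent sheaf $E$, and, using the splitting $T=T'\oplus\bigl(\bigoplus_{j>j'}L_j^{-n}\oplus L_j^{-m}\bigr)$, deleting the rows of index $>j'$ from $\sheafHom_X(T,\mathcal{E})$ yields $\sheafHom_X(T',\mathcal{E})$ for any flat family $\mathcal{E}$ of sheaves. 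Second, any $(p,\underline{L})$-regular sheaf is in particular $(p,\underline{L}')$-regular, hence also $(n,\underline{L}')$-regular. Third, a coherent sheaf is $\sigma$-(semi)stable if and only if it is $\sigma'$-(semi)stable, since the defining inequality only involves the indices $j\le j'$; this is the equality $\mathcal{M}_\sigma=\mathcal{M}_{\sigma'}$ already used in the proof of Corollary~\ref{cor:obtaining}.

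Next, over $Q$ I would consider the tautological family $\mathcal{E}_Q:=(\mathscr{U}|_Q)\otimes_{\mathscr{A}}T$, which by the definition of $Q$ together with Proposition~\ref{prop:identifyingtheimage}(a) is a flat family of $(p,\underline{L})$-regular sheaves of topological type $\tau$ for which the unit $\mathscr{U}|_Q\cong\sheafHom_X(T,\mathcal{E}_Q)$ is an isomorphism. Deleting the rows of index $>j'$ and invoking the first fact gives $(\pi^*\mathscr{U}')|_Q\cong\sheafHom_X(T',\mathcal{E}_Q)$, where, by the second fact, $\mathcal{E}_Q$ is a flat family of $(n,\underline{L}')$-regular sheaves of topological type $\tau$. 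Proposition~\ref{prop:identifyingtheimage}(b), applied with base $R'$, family $\mathscr{U}'$, and the morphism $\pi|_Q\colon Q\to R'$, then shows that $\pi|_Q$ factors through the locally closed subscheme of $R'$ parametrising $(n,\underline{L}')$-regular representations, with associated family $(\pi|_Q)^*\mathscr{U}'\otimes_{\mathscr{A}'}T'\cong\mathcal{E}_Q$. Since every fibre of $\mathcal{E}_Q$ is $(p,\underline{L}')$-regular and $Q'$ is precisely the open subscheme of that locus over which the associated sheaf is $(p,\underline{L}')$-regular, $\pi|_Q$ factors through the open immersion $Q'\hookrightarrow R'$, giving the first asserted morphism $\pi\colon Q\to Q'$.

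Finally, restricting to the open subscheme $Q^{[\sigma\text{-ss}]}\subset Q$: for a point $[M]\in Q^{[\sigma\text{-ss}]}$ the sheaf associated to $\pi(M)\in Q'$ is the fibre $\mathcal{E}_Q|_{[M]}$, by the identification $\mathcal{E}_Q\cong(\pi|_Q)^*\mathscr{U}'\otimes_{\mathscr{A}'}T'$ above, and this sheaf is $\sigma$-semistable, hence $\sigma'$-semistable by the third fact; thus $\pi$ carries $Q^{[\sigma\text{-ss}]}$ set-theoretically into the open subscheme $Q'^{[\sigma'\text{-ss}]}\subset Q'$ and therefore factors through it, yielding $\pi\colon Q^{[\sigma\text{-ss}]}\to Q'^{[\sigma'\text{-ss}]}$. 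The only step that is not a routine verification will be the factorisation through the locus of $(n,\underline{L}')$-regular representations -- a non-open subscheme of $R'$ -- which has to be deduced from the universal property in Proposition~\ref{prop:identifyingtheimage}(b) rather than checked pointwise; all subsequent refinements are passages to open subschemes, where set-theoretic containment of images suffices, and facts one through three are immediate from the definitions.
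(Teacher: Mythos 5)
Your proposal is correct and follows essentially the same route as the paper: both arguments use the canonical identification $\pi(\mathscr{U})=\pi^*\mathscr{U}'$ together with the universal property in Proposition~\ref{prop:identifyingtheimage}(b) to factor $\pi$ through the $(n,\underline{L}')$-regular locus in $R'$, and then pass to the open conditions via the elementary implications ``$(p,\underline{L})$-regular $\Rightarrow$ $(p,\underline{L}')$-regular'' and ``$\sigma$-semistable $\Rightarrow$ $\sigma'$-semistable''. The only cosmetic difference is that the paper first establishes the factorisation over all of $R^{[n\text{-reg}]}$ and then restricts to $Q$ and $Q^{[\sigma\text{-ss}]}$, whereas you apply the universal property directly over $Q$; this changes nothing of substance.
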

\begin{proof}
We start by introducing some terminology.  Given a flat family $\mathscr{M}$ of $A$-modules over $S$ we write $\pi(\mathscr{M})$ to mean the induced family of $A'$-modules over $S$ obtained by ignoring all but the top $j'$ rows of the quiver $\mathcal{Q}$.  If $u\colon S'\to S$ is a morphism then $u^*\mathscr{M}$ is a family of $A$-modules over $S'$ and clearly \begin{equation}\label{eq:commute}
 \pi(u^*\mathscr{M}) = u^*\pi(\mathscr{M}).                                                                                                                                                                                                                                                                                                                                                                                                                                                                                                                                                                                                                                                                                                                  \end{equation}

 Now let $\mathscr{U}$ be the tautological family of $A$-modules parametrised by $R$ and  $\mathscr{U}'$ the tautological family of $A'$-modules parametrised by $R'$. 
Then,
\begin{equation}\pi(\mathscr{U}) = \pi^* \mathscr{U}'\label{eq:compareuniversalfamilies}\end{equation}
as families of $A'$-modules parametrised by $R$.

We first show that the restriction of $\pi$ induces  $\pi\colon R^{[n\text{-reg}]} \to R'^{[n\text{-reg}]}$.     By definition of $\iota\colon R^{[n\text{-reg}]}\to R$ (Proposition \ref{prop:identifyingtheimage} applied with $B,\mathscr{M}$ replaced by $R,\mathscr{U}$) we have that  $\iota^* \mathscr{U} \otimes_{\mathscr A} T$ is an  $R^{[n\text{-reg}]}$-flat family of $(n,\underline{L})$-regular sheaves on $X$ of topological type $\tau$ and the unit map
 \begin{equation*}
 \eta_{\iota^*\mathscr{U}}\colon  \iota^* \mathscr{U} \to \sheafHom_X(T, \iota^* \mathscr{U} \otimes_\mathscr{A} T)\label{eq:unitmap}
 \end{equation*}
is an isomorphism.  Observe that  since each sheaf in the family  $\iota^* \mathscr{U} \otimes_{\mathscr A} T$ is $(n,\underline{L})$-regular, it is also $(n,\underline{L}')$-regular.  Moreover, using \eqref{eq:compareuniversalfamilies} we have
$$\iota^* \pi^* \mathscr{U}' = \iota^* \pi(\mathscr{U}) = \pi(\iota^* \mathscr{U}) \cong \pi(\sheafHom_X(T, \iota^* \mathscr{U} \otimes_\mathscr{A} T))= \sheafHom_X(T', \iota^* \mathscr{U} \otimes_\mathscr{A} T).$$
So, from the defining property of $R'^{[n\text{-reg}]}$ (the second statement in Proposition \ref{prop:identifyingtheimage} with $B,\mathscr{M}$ replaced by $R',\mathscr{U}'$) this precisely says that $\pi\circ\iota\colon R^{[n\text{-reg}]} \to R'$ factors through $R'^{[n\text{-reg}]}$, and moreover
\begin{equation}\label{eq:pullbackuniversal}
\iota^* \mathscr{U} \otimes_\mathscr{A} T \cong  \iota^*\pi^* \mathscr{U}' \otimes_{\mathscr{A'}} T'\end{equation}
as $R^{[n\text{-reg}]}$-flat families of sheaves on $X$ of topological type $\tau$.

Now $Q$ (resp.\ $Q^{[\sigma\text{-ss}]}$) is the open subset of $R^{[n\text{-reg}]}$ consisting of those points over which  $\mathscr{U} \otimes_\mathscr{A} T$ is a sheaf on $X$ that is $(p,\underline{L})$-regular (resp.\ $\sigma$-semistable), and similarly for $Q'$ and $Q'^{[\sigma\text{-ss}]}$.  But a sheaf being being $(p,\underline{L})$-regular (resp.\ $\sigma$-semistable) clearly implies that it is also $(p,\underline{L}')$-regular (resp.\ $\sigma'$-semistable).  Thus, \eqref{eq:pullbackuniversal} implies that $\pi\colon Q\to Q'$ and $\pi\colon Q^{[\sigma\text{-ss}]} \to Q'^{[\sigma\text{-ss}]}$ as claimed.
\end{proof}

Next, we consider the compatibility of $\pi$ with the GIT-stability conditions $\sigma$ and $\sigma'$. This will establish the first statement of Proposition~\ref{prop:maintechinical}.
\begin{lemma}\label{lem:desiredmap}
 As before, let $Z^{\sigma\text{-ss}} = Z \cap R^{\sigma\text{-ss}}$ and  $Z'^{\,\sigma'\text{-ss}} = Z' \cap R'^{\,\sigma'\text{-ss}}$.  Then, the restriction of $\pi$ induces a morphism
  $$\pi\colon Z^{\sigma\text{-ss}}\to Z'^{\,\sigma'\text{-ss}}.$$
\end{lemma}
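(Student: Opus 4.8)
The plan is to deduce the statement from Lemma~\ref{lem:pinducesQ} by two supplementary observations: a formal, scheme-theoretic one allowing us to pass to scheme-theoretic closures, and a short representation-theoretic one showing that $\pi$ carries $\sigma$-semistable modules to $\sigma'$-semistable ones (this makes precise the ``saturatedness'' remark from the start of the section). Since $Z'^{\,\sigma'\text{-ss}} = Z' \cap R'^{\,\sigma'\text{-ss}}$ is a closed subscheme of the open subscheme $R'^{\,\sigma'\text{-ss}}$ of $R'$, it will suffice to check that the restriction of $\pi$ to $Z^{\sigma\text{-ss}}$ factors through $Z'$ as a morphism of schemes and that its set-theoretic image lies in $R'^{\,\sigma'\text{-ss}}$.

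For the first point, observe that $\pi^{-1}(Z') = R \times_{R'} Z'$ is a closed subscheme of $R$. By Lemma~\ref{lem:pinducesQ} the composite $Q^{[\sigma\text{-ss}]} \hookrightarrow R \xrightarrow{\pi} R'$ factors through $Q'^{[\sigma'\text{-ss}]}$, which in turn factors through its scheme-theoretic closure $Z' = \overline{Q'^{[\sigma'\text{-ss}]}}$ by the defining universal property of that closure; hence $Q^{[\sigma\text{-ss}]} \hookrightarrow R$ factors through the closed subscheme $\pi^{-1}(Z')$. Since $Z = \overline{Q^{[\sigma\text{-ss}]}}$ is by definition the smallest closed subscheme of $R$ through which $Q^{[\sigma\text{-ss}]} \hookrightarrow R$ factors, we get $Z \subseteq \pi^{-1}(Z')$, i.e.\ $\pi|_Z$ is a morphism $\pi\colon Z \to Z'$.

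For the second point I would first record the identity $\theta_\sigma(M') = \theta_{\sigma'}\bigl(\pi(M')\bigr)$, valid for every $A$-submodule $M'$ of an $A$-module of dimension vector $\underline d$, where $\pi(M')$ denotes the truncation of $M'$ to the top $j'$ rows: it is immediate from $\theta'_{ji} = \theta_{ji}$ for $j \le j'$ together with the vanishing $\theta_{j1} = \theta_{j2} = 0$ for $j > j'$, which holds because $\sigma_j = 0$ there, so only the top $j'$ rows of $M'$ contribute to $\theta_\sigma(M')$ and they contribute precisely $\theta_{\sigma'}$ of $\pi(M')$. Now, given $M \in R^{\sigma\text{-ss}}$ and an $A'$-submodule $N' \subseteq \pi(M)$, one extends $N'$ to an $A$-submodule $M' \subseteq M$ with $\pi(M') = N'$ --- take the zero subspace at every vertex $v_j$ with $j > j'$ and the full space $W_j$ at every vertex $w_j$ with $j > j'$, so that the subrepresentation condition is automatic --- whence $\theta_{\sigma'}(N') = \theta_\sigma(M') \le 0$ and $\pi(M)$ is $\sigma'$-semistable. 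Thus $\pi\bigl(R^{\sigma\text{-ss}}\bigr) \subseteq R'^{\,\sigma'\text{-ss}}$ (the same identity also yields $\pi^{-1}(R'^{\,\sigma'\text{-ss}}) \subseteq R^{\sigma\text{-ss}}$, recovering saturatedness, although only the stated inclusion is needed).

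Combining the two points, restricting $\pi\colon Z \to Z'$ to the open subscheme $Z^{\sigma\text{-ss}} = Z \cap R^{\sigma\text{-ss}}$ of $Z$ produces a morphism whose image lies in the open subscheme $R'^{\,\sigma'\text{-ss}}$, hence in $Z' \cap R'^{\,\sigma'\text{-ss}} = Z'^{\,\sigma'\text{-ss}}$; this is the desired map $\pi\colon Z^{\sigma\text{-ss}} \to Z'^{\,\sigma'\text{-ss}}$. The genuine content has already been isolated in Lemma~\ref{lem:pinducesQ}; what needs care here is only to perform the passage to closures scheme-theoretically rather than merely on underlying sets, which is precisely what the fibre-product argument in the first point accomplishes, and I expect no real obstacle beyond that.
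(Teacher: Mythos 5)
Your proof is correct and follows the same two-step outline as the paper's: first descend from Lemma \ref{lem:pinducesQ} to the scheme-theoretic closures, then check that $\pi$ carries $\sigma$-semistable modules to $\sigma'$-semistable ones. The differences are small but worth recording. For the closure step the paper argues via the Stacks Project lemmas on scheme-theoretic images of compositions, whereas you use the base-changed closed subscheme $\pi^{-1}(Z')=R\times_{R'}Z'$ together with the minimality of $Z=\overline{Q^{[\sigma\text{-ss}]}}$; the two arguments have the same content, and yours is self-contained (and your reduction to a set-theoretic check for landing in the open subscheme $R'^{\,\sigma'\text{-ss}}$ is fine). For the semistability step the paper extends an $A'$-submodule $M'\subset\pi(M)$ by the \emph{zero} spaces at all vertices outside $\mathcal{Q}'$; taken literally this need not be a subrepresentation of $M$, since for arrows $\alpha_{ij}$ with $i\le j'$ and $j>j'$ one would need $\phi_{ij}(V_i'\otimes H_{ij})=0$, which can fail. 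Your extension --- zero at the source vertices $v_j$ and the full spaces $W_j$ at the target vertices $w_j$ for $j>j'$ --- makes the subrepresentation condition automatic, and the extra summands contribute nothing to $\theta_\sigma$ because $\theta_{j2}=0$ for $j>j'$ (as $\sigma_j=0$ there), so the key identity $\theta_{\sigma'}(M')=\theta_\sigma(M_0)\le 0$ holds exactly as intended. In this one place your argument is actually the more careful of the two.
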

\begin{proof}
Recall that $Z = \overline{Q^{[\sigma\text{-ss}]}}$ is the scheme-theoretic closure of $Q^{[\sigma\text{-ss}]}$ in $R$ and similarly $Z'= \overline{Q'^{\, [\sigma' \text{-ss}]}}$. We claim that since the restriction of $\pi\colon R\to R'$ to $Q^{[\sigma\text{-ss}]}$ induces $\pi\colon Q^{[\sigma\text{-ss}]} \to Q'^{[\sigma'\text{-ss}]}$, the restriction of $\pi$ to $Z$ induces a morphism $\pi\colon Z\to Z'$ between the corresponding scheme-theoretic closures. To see this, recall that the scheme-theoretic image of a morphism $f\colon X\to Y$ between schemes is the smallest closed subscheme $V$ of $Y$ such that $f$ factors through $V$ \cite[\href{http://stacks.math.columbia.edu/tag/01R6}{Tag 01R6}]{stacks-project}.  Let $f\colon Q^{[\sigma\text{-ss}]}\to R'$ be the composition of the inclusion and the projection $\pi\colon R\to R'$, and let $V$ be the scheme-theoretic closure of $f$.  Then, as $f$ factors through $Q'^{[\sigma'\text{-ss}]}\subset R'$ (by Lemma \ref{lem:pinducesQ}) it also factors through the closed subscheme $\overline{Q'^{[\sigma'\text{-ss}]}}\subset R'$.  Hence, $V\hookrightarrow \overline{Q'^{[\sigma'\text{-ss}]}}= Z'$.  On the other hand, by \cite[\href{http://stacks.math.columbia.edu/tag/01R9}{Tag 01R9}]{stacks-project} the composition $f$ factors through a commutative diagram
$$\begin{xymatrix}
{
Q^{[\sigma\text{-ss}]} \ar[rd] \ar@{^(->}[r] &Z\ar@{^(->}[r] \ar[d] &R\ar[d]^\pi\\
& V\ar@{^(->}[r] &R'.
}
\end{xymatrix}
$$
Thus, the restriction of $\pi$ to $Z$ induces $\pi\colon Z\to Z'$, as claimed.

Next, we claim that $\pi$ maps $R^{\sigma\text{-ss}}$ to $R'^{\,\sigma'\text{-ss}}$, which along with the claim proven in the previous paragraph immediately establishes the statement of the lemma.  But this is clear, for suppose that $M\in R^{\sigma\text{-ss}}$ and $M'$ is a proper $A'$-submodule of $\pi(M)$.   Then, we may extend $M'$ to an $A$-module $M_0$ by associating the zero vector space for all vertices of $\mathcal{Q}$ that are not in the first $j'$ rows and also associating the zero morphism to any arrow that starts or ends outside the first $j'$ rows.  Clearly, $M_0$ is a proper $A$-submodule of $M$, and hence
$$\theta_{\sigma'}(M') = \theta_{\sigma}(M_0) \le 0,$$
where the inequality uses semistability of $M$.  Thus, $\pi(M)\in R'^{\,\sigma'\text{-ss}}$,  as required.
\end{proof}

Now, as $Z^{\sigma\text{-ss}}$ admits a good quotient by the $G$-action, by Lemma~\ref{lem:exerciseschmitt} it also admits a good quotient by $G''$.     As good quotients are categorical, the $G''$-invariance of $\pi$ together with Lemma~\ref{lem:desiredmap} yields an induced morphism
\[\hat{\pi}\colon Z^{\sigma\text{-ss}}\hq G'' \to Z'^{\,\sigma'\text{-ss}}.\]
By construction, $\hat\pi$ is $G'$-equivariant.

\subsection{Proving the second statement of Proposition~\ref{prop:maintechinical}}\label{subsect:secondhalf}
In order to prove the second statement of Proposition~\ref{prop:maintechinical}, we now construct an inverse to the morphism $\hat{\pi}$.  First observe that since $\sigma'$ is a positive stability parameter, Theorem \ref{thm:moduliexist} gives
$$ Z'^{\,\sigma'\text{-ss}} = Q'^{[\sigma'\text{-ss}]}.$$
So, our aim is the construction of a natural morphism
$$ \hat{s} \colon  Q'^{[\sigma'\text{-ss}]} \to Z^{\sigma\text{-ss}}\hq G''.$$
 To this end, as before let $\mathscr{U}'$ be the tautological family of $A'$-modules parametrised by $R'$ and let $\iota\colon  Q'^{[\sigma'\text{-ss}]} \to R'$ be the inclusion.  So, by construction
\begin{equation}\mathcal{E}': =\iota^*\mathscr{U}' \otimes_{\mathscr{A}'} T'\label{eq:mathscrE'}\end{equation}
is a $Q'^{[\sigma'\text{-ss}]}$-flat family of sheaves on $X$ of topological type $\tau$ that are $\sigma'$-semistable, and the unit map
 \begin{equation}\eta\colon  \iota^* \mathscr{U}' \to \sheafHom_X(T', \mathcal{E}')\label{eq:unitmapiso}\end{equation}
is an isomorphism.

Consider now the family of $A$-modules
\begin{equation}\mathscr{N}:= \sheafHom_X(T,\mathcal{E}')\label{eq:defscrN}\end{equation}
over $Q'^{[\sigma'\text{-ss}]}$.  Since each sheaf in the family $\mathcal E'$ is $\sigma'$-semistable, it is also $\sigma$-semistable and hence $(p,\underline{L})$-regular by our choice of integers $m\gg n\gg p$.  Thus, $\mathscr{N}$ is a flat family of $A$-modules of dimension $\underline{d}$ (flatness of $\mathscr{N}$ follows from flatness of $\mathcal{E}'$ and $(n,\underline{L})$-regularity as explained in the proof of \cite[Proposition 5.8]{GRTI}). Moreover, by preservation of semistability, Theorem~\ref{thm:mainsemistabilitycomparison}(1), $\mathscr{N}$ is in fact a flat family of $\sigma$-semistable $A$-modules.

We now define our desired map $\hat{s}$, first over a sufficiently small affine open subset $\Omega \subset Q'^{[\sigma'\text{-ss}]}$.  To do this, recall that $\mathscr{U}'$ is canonically trivialised, and hence by \eqref{eq:unitmapiso} we have a trivialisation of $\sheafHom_X(T',\mathcal{E}')$.  Said another way, the vector bundles in $\mathscr{N}$ that are associated with the vertices of the subquiver $\mathcal{Q}'$ are trivialised.   If $\Omega$ is sufficiently small, we may pick a trivialisation of the remaining part of $\mathscr{N}$ (so trivialisations for the vector bundles associated with the vertices outside of $\mathcal{Q}'$).  Denote this trivialisation  by $\phi$.  Then, $\mathscr{N}$ is trivialised over $\Omega$, and so induces a morphism $s_{\phi}\colon \Omega \to R$ fulfilling 
\begin{equation}\label{eq:pullbackuniversal1}
s_{\phi}^*\mathscr{U} = \mathscr{N}
\end{equation}
(see the discussion at the end of Section \ref{sec:families}). From the defining properties of $Q^{[\sigma\text{-ss}]}$ and $R^{\sigma\text{-ss}}$ we infer that $s_\phi$ factors through a morphism $s_\phi\colon \Omega \to   \overline{Q^{[\sigma\text{-ss}]}}  \cap R^{\sigma\text{-ss}} = Z^{\sigma\text{-ss}}$.  We let 
$$\hat{s}_\phi \colon \Omega \to   Z^{\sigma\text{-ss}}\hq G''$$
be the composition of $s_\phi$ with the good quotient $Z^{\sigma\text{-ss}} \to Z^{\sigma\text{-ss}}\hq G''$.  Observe that, essentially by construction, $\hat{s}_{\phi}$ is independent of the chosen trivialisation $\phi$, since any other choice can be obtained by composing $\phi$ with the action of a morphism $\Omega\to G''$, and since the quotient map $Z^{\sigma\text{-ss}} \to Z^{\sigma\text{-ss}}\hq G''$ is constant on $G''$-orbits.  Thus, we may glue the locally defined $s_\phi$ to obtain the desired global morphism
$$ \hat{s} \colon  Q'^{[\sigma'\text{-ss}]} \to Z^{\sigma\text{-ss}}\hq G''.$$

Now, recall that from Lemma \ref{lem:desiredmap} and the subsequent discussion that we have already obtained morphisms fitting into the following diagram
$$\begin{xymatrix}
{
Z^{\sigma\text{-ss}} \ar[r]^{\pi}\ar@{->>}[d] &Z'^{\,\sigma'\text{-ss}}\\
Z^{\sigma\text{-ss}} \hq G''. \ar[ru]_{\hat{\pi}}
}
\end{xymatrix}
$$
In the final step, we prove compatibility of $\hat \pi$ with the morphism $\hat s$ just constructed.
\begin{lemma}
There is a  commutative diagram
$$\begin{xymatrix}
{ Q'^{[\sigma'\text{-ss}]} \ar[r]^{\hat{s}} \ar[rd]_{\hat{\pi}\circ\hat{s}} & Z^{\sigma\text{-ss}}\hq G'' \ar[d]^{\hat{\pi}} & \\
 & \overline{ \pi(Z^{\sigma\text{-ss}} )} \ar@{^(->}[r]^-{\iota} &  Z'^{\,\sigma'\text{-ss}}= Q'^{[\sigma'\text{-ss}]}, }
\end{xymatrix}$$
where $\overline{\pi(Z^{\sigma\text{-ss}})}$ denotes scheme-theoretic image of $\pi$, and $\iota$ is the natural inclusion. Moreover, we have
$$\iota \circ \hat{\pi}\circ \hat{s}= \id \colon Q'^{[\sigma'\text{-ss}]} \to Q'^{[\sigma'\text{-ss}]} .$$
\end{lemma}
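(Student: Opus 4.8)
The plan is to prove the two assertions in turn, reducing everything to the local description of $\hat s$ via the trivialisation-to-morphism correspondence recalled at the end of Section~\ref{sec:families}. For the commutativity of the diagram, note that $\hat\pi$ was produced from $\pi\colon Z^{\sigma\text{-ss}}\to Z'^{\,\sigma'\text{-ss}}$ by passing to the good quotient $q\colon Z^{\sigma\text{-ss}}\to Z^{\sigma\text{-ss}}\hq G''$, so $\hat\pi\circ q=\pi$. A good quotient map is schematically dominant, since $\mathcal O_{Z^{\sigma\text{-ss}}\hq G''}$ is a subsheaf of $q_*\mathcal O_{Z^{\sigma\text{-ss}}}$; hence the scheme-theoretic image of $\hat\pi$ equals that of $\hat\pi\circ q=\pi$, that is, $\overline{\pi(Z^{\sigma\text{-ss}})}$, and by the universal property of the scheme-theoretic image $\hat\pi$ factors through the closed immersion $\iota\colon\overline{\pi(Z^{\sigma\text{-ss}})}\hookrightarrow Z'^{\,\sigma'\text{-ss}}$. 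Post-composing this factorisation with $\hat s$ gives the asserted diagram. Since $\iota$ composed with that factorisation recovers the original morphism $\hat\pi\colon Z^{\sigma\text{-ss}}\hq G''\to Z'^{\,\sigma'\text{-ss}}$ from Lemma~\ref{lem:desiredmap} and the subsequent discussion, the ``moreover'' becomes the statement that the composite $Q'^{[\sigma'\text{-ss}]}\xrightarrow{\hat s}Z^{\sigma\text{-ss}}\hq G''\xrightarrow{\hat\pi}Z'^{\,\sigma'\text{-ss}}$ is the identity of $Q'^{[\sigma'\text{-ss}]}=Z'^{\,\sigma'\text{-ss}}$.

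To see this, recall that $\hat s$ is glued from the morphisms $\hat s_\phi=q\circ s_\phi$ over the affine opens $\Omega\subset Q'^{[\sigma'\text{-ss}]}$ appearing in its construction, where $s_\phi\colon\Omega\to Z^{\sigma\text{-ss}}$ classifies a chosen trivialisation $\phi$ of $\mathscr N|_\Omega$, i.e.\ $s_\phi^*\mathscr U\cong\mathscr N|_\Omega$ compatibly with the canonical trivialisation of $\mathscr U$ and with $\phi$. As morphisms of schemes agree once they agree on an open cover, it suffices to show that $\hat\pi\circ\hat s$ restricts on each such $\Omega$ to the open inclusion $\Omega\hookrightarrow Q'^{[\sigma'\text{-ss}]}$; using $\hat\pi\circ q=\pi$, this restriction equals $\hat\pi\circ q\circ s_\phi=\pi\circ s_\phi$, so it remains to identify $\pi\circ s_\phi\colon\Omega\to Z'^{\,\sigma'\text{-ss}}$ with the inclusion of $\Omega$. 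By \eqref{eq:compareuniversalfamilies} and \eqref{eq:commute},
\[ (\pi\circ s_\phi)^*\mathscr U' \;=\; s_\phi^*\pi^*\mathscr U' \;=\; s_\phi^*\pi(\mathscr U) \;=\; \pi(s_\phi^*\mathscr U)\;\cong\;\pi(\mathscr N|_\Omega), \]
the isomorphism carrying the canonical trivialisation of $\mathscr U'$ to the part of $\phi$ indexed by the vertices of $\mathcal Q'$. Now $\pi(\mathscr N)=\pi\bigl(\sheafHom_X(T,\mathcal E')\bigr)=\sheafHom_X(T',\mathcal E')$ directly from the definitions of $T$, $T'$ and $\pi$ (exactly as in the proof of Lemma~\ref{lem:pinducesQ}), and by construction the $\mathcal Q'$-part of $\phi$ is precisely the trivialisation of $\sheafHom_X(T',\mathcal E')$ transported from the canonical trivialisation of $\iota^*\mathscr U'$ through the unit isomorphism \eqref{eq:unitmapiso}. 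Consequently $\pi(\mathscr N|_\Omega)$, together with the trivialisation inherited from $\phi$, is isomorphic to the restriction of $\iota^*\mathscr U'$ to $\Omega$ with its canonical trivialisation; by the uniqueness part of the correspondence, $\pi\circ s_\phi$ is the morphism classifying that restricted family, namely the inclusion $\Omega\hookrightarrow R'$, which (by Lemma~\ref{lem:desiredmap}) factors through $\Omega\hookrightarrow Q'^{[\sigma'\text{-ss}]}$. Gluing over the cover then yields the desired identity.

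The only genuinely delicate point is this final identification: one must keep track not merely of the families $\pi(\mathscr N|_\Omega)$ and the restriction of $\iota^*\mathscr U'$ to $\Omega$ up to isomorphism, but also of their \emph{trivialisations}, because the classifying morphism to $R'$ is pinned down only after a trivialisation is fixed --- a different choice would change it by the action of some morphism $\Omega\to G'$. This is exactly why it matters that, in the construction of $\hat s$, the trivialisation $\phi$ was obtained by \emph{extending} rather than replacing the trivialisation of the $\mathcal Q'$-part coming from \eqref{eq:unitmapiso}; granting that, everything else is formal bookkeeping with \eqref{eq:commute}, \eqref{eq:compareuniversalfamilies}, and the identity $\pi\bigl(\sheafHom_X(T,-)\bigr)=\sheafHom_X(T',-)$.
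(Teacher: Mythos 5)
Your proposal is correct and follows essentially the same route as the paper: the commutativity is the formal factorisation of $\hat\pi$ through the scheme-theoretic image of $\pi$, and the identity $\iota\circ\hat\pi\circ\hat s=\id$ is verified locally on the affine opens $\Omega$ via exactly the paper's chain of canonical identifications $(\pi\circ s_\phi)^*\mathscr U'=s_\phi^*\pi(\mathscr U)=\pi(\mathscr N)=\sheafHom_X(T',\mathcal E')\cong\mathscr U'|_\Omega$, followed by gluing. Your extra bookkeeping of the trivialisations (that the $\mathcal Q'$-part of $\phi$ is the canonical one coming from the unit isomorphism, so the classifying map of $\pi(\mathscr N|_\Omega)$ is the inclusion) just makes explicit what the paper subsumes under ``canonical identifications''.
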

\begin{proof}

That $\hat{\pi}$ factors this way is clear from the definition. It is also clear from the above construction that $\iota\circ\hat{\pi}\circ \hat{s}$ is the identity pointwise.  This also holds in families, as using the above discussion (in particular \eqref{eq:commute}, \eqref{eq:compareuniversalfamilies}, \eqref{eq:mathscrE'}, \eqref{eq:unitmapiso}, \eqref{eq:defscrN}, \eqref{eq:pullbackuniversal1}), over a sufficiently small affine subset $\Omega$ of $Q'^{[\sigma'\text{-ss}]}$ allowing for a trivialisation $\phi$ of $\mathscr{N}$ as above, we have canonical identifications
$$ (\pi s_{\phi})^*  \mathscr{U}'\negthinspace = \negthinspace s_{\phi}^* \pi^* \mathscr{U}' \negthinspace = \negthinspace s_{\phi}^* \pi(\mathscr{U}) \negthinspace = \negthinspace \pi( s_{\phi}^* \mathscr{U})\negthinspace = \negthinspace \pi(\mathscr{N}) \negthinspace = \negthinspace \pi(\sheafHom_X(T,\mathcal{E}')) \negthinspace = \negthinspace \sheafHom_X(T',\mathcal{E}') \negthinspace =  \negthinspace \mathscr{U}'\negthinspace,  $$
which shows that $\pi\circ s_{\phi} = \id$ on $\Omega$.  Hence, $\hat{\pi}\circ \hat{s}$ is the identity of $Q'^{[\sigma'\text{-ss}]}$, as claimed.
\end{proof}

\begin{lemma}
 The scheme-theoretic image $\overline{\pi(Z^{\sigma\text{-ss}})}$ coincides with $ Q'^{[\sigma'\text{-ss}]} =Z'^{\,\sigma'\text{-ss}}$.
\end{lemma}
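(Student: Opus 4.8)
The plan is to extract the statement formally from the previous lemma, which produces a morphism $g:=\hat{\pi}\circ\hat{s}\colon Q'^{[\sigma'\text{-ss}]}\to \overline{\pi(Z^{\sigma\text{-ss}})}$ with the property that $\iota\circ g=\id_{Q'^{[\sigma'\text{-ss}]}}$, where $\iota\colon \overline{\pi(Z^{\sigma\text{-ss}})}\hookrightarrow Z'^{\,\sigma'\text{-ss}}=Q'^{[\sigma'\text{-ss}]}$ denotes the inclusion of the scheme-theoretic image. Thus $\iota$ is a closed immersion that admits a section. First I would recall the two elementary facts that closed immersions of schemes are monomorphisms, and that a monomorphism admitting a section is an isomorphism: from $\iota\circ g=\id$ one gets $\iota\circ(g\circ\iota)=\iota=\iota\circ\id$, and cancelling the monomorphism $\iota$ gives $g\circ\iota=\id$, so that $g$ is a two-sided inverse of $\iota$. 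Hence $\iota$ is an isomorphism, i.e.\ $\overline{\pi(Z^{\sigma\text{-ss}})}=Q'^{[\sigma'\text{-ss}]}=Z'^{\,\sigma'\text{-ss}}$, which is precisely the assertion.

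The only point that is not entirely formal — and which is already invoked in the previous lemma — is that $\hat{\pi}$, and not merely $\pi=\hat{\pi}\circ q$, factors through the scheme-theoretic image of $\pi$, where $q\colon Z^{\sigma\text{-ss}}\to Z^{\sigma\text{-ss}}\hq G''$ is the good quotient. I would justify this by observing that $q$ is scheme-theoretically dominant: by definition of a good quotient, the structure sheaf of $Z^{\sigma\text{-ss}}\hq G''$ is the $G''$-invariant subsheaf of $q_*\mathcal{O}_{Z^{\sigma\text{-ss}}}$, hence $\mathcal{O}_{Z^{\sigma\text{-ss}}\hq G''}\hookrightarrow q_*\mathcal{O}_{Z^{\sigma\text{-ss}}}$. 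Since pushforward is left exact and $\pi_*\mathcal{O}_{Z^{\sigma\text{-ss}}}=\hat{\pi}_*q_*\mathcal{O}_{Z^{\sigma\text{-ss}}}$, the maps $\mathcal{O}_{Z'^{\,\sigma'\text{-ss}}}\to\pi_*\mathcal{O}_{Z^{\sigma\text{-ss}}}$ and $\mathcal{O}_{Z'^{\,\sigma'\text{-ss}}}\to\hat{\pi}_*\mathcal{O}_{Z^{\sigma\text{-ss}}\hq G''}$ have the same kernel, so the scheme-theoretic images of $\pi$ and of $\hat{\pi}$ coincide; and $\hat{\pi}$ trivially factors through its own scheme-theoretic image. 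As all the schemes occurring are of finite type over $k$, hence Noetherian, no further subtlety about the formation of scheme-theoretic images arises, and $\overline{\pi(Z^{\sigma\text{-ss}})}$ genuinely is a closed subscheme of $Z'^{\,\sigma'\text{-ss}}$ through which both $\pi$ and $\hat{\pi}$ factor.

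I do not expect any genuine obstacle here: the geometric substance has already been spent in constructing $\hat{s}$ and in checking $\hat{\pi}\circ\hat{s}=\id$ at the level of families, and the present lemma is exactly the formal step that upgrades ``$\iota$ has a section'' to ``$\iota$ is an isomorphism''. For orientation I note the cruder alternative of first proving that $\pi$ is surjective on closed points — for a closed point $x$ of $Q'^{[\sigma'\text{-ss}]}$, any preimage $\tilde{x}\in Z^{\sigma\text{-ss}}$ of $\hat{s}(x)$ under $q$ satisfies $\pi(\tilde{x})=x$ — and then trying to conclude; however, topological surjectivity alone only shows that the underlying space of $\overline{\pi(Z^{\sigma\text{-ss}})}$ is all of $Q'^{[\sigma'\text{-ss}]}$, and ruling out a nilpotent thickening would additionally require $Q'^{[\sigma'\text{-ss}]}$ to be reduced, which is not available. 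The section argument sidesteps this entirely, so I would present the proof through the section.
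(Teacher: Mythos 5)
Your proof is correct and takes essentially the same route as the paper: the paper likewise deduces the statement from $\iota\circ\hat{\pi}\circ\hat{s}=\id$ together with $\iota$ being a closed immersion, only it unpacks your categorical step (a monomorphism admitting a section is an isomorphism) affine-locally, as ring maps $A\xrightarrow{\;\iota^*\;}B\to A$ composing to the identity with $\iota^*$ surjective. Your extra verification that $\hat{\pi}$, and not merely $\pi$, factors through the scheme-theoretic image of $\pi$ (via scheme-theoretic dominance of the good quotient map $q$) is a correct justification of a point the paper treats as immediate in the preceding lemma.
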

\begin{proof}
 Consider the sequence of maps
  $$  Q'^{[\sigma'\text{-ss}]}= Z'^{\,\sigma'\text{-ss}}\xrightarrow{\; \hat{\pi}\circ \hat{s}\;} \overline{\pi(Z^{\sigma\text{-ss}})} \stackrel{\iota}{\hookrightarrow} Z'^{\,\sigma'\text{-ss}}.$$
 The fact that $\iota \circ \hat{\pi}\circ \hat{s}=\id$ in a first step gives equality $\overline{\pi(Z^{\sigma\text{-ss}})}_{red} = ( Z'^{\,\sigma'\text{-ss}})_{red}$ on the level of reduced spaces. But this implies in a second step that the scheme structures also agree: indeed, since these schemes have the same support we may work affine locally, so the scheme structures are related by ring morphisms  $A\stackrel{\iota^*}{\to} B \to A$ that compose to the identity. As at the same time $\iota^*$ is surjective, it has to be an isomorphism.
\end{proof}

Consequently, the map
$$ \hat{\pi}\colon Z^{\sigma\text{-ss}}\hq G'' \to Z'^{\,\sigma'\text{-ss}}$$
is a $G'$-equivariant isomorphism, establishing the second assertion of Proposition~\ref{prop:maintechinical}.

\section{Master space construction}\label{sect:master_construction}

We are now ready to prove our main variation result.  We continue to use the notation introduced above, so $L_1,\ldots, L_{j_0}$ are fixed very ample line bundles on $X$ and we have fixed a topological type $\tau$.  Suppose $\Sigma \subset \mathbb \mathbb Q^{j_0}_{\ge 0} \{ 0 \}$ is a finite collection of rational stability parameters, and we assume that the family consisting of those sheaves of topological type $\tau$ that are semistable with respect to some $\sigma\in \Sigma$ is bounded.  We choose $m\gg n\gg p$ so the conclusions of Theorem \ref{thm:categoryembedding} and Theorem \ref{thm:mainsemistabilitycomparison}(1)  hold for each $\sigma \in \Sigma$.  By enlarging these integers if necessary, we may assume Theorems \ref{thm:categoryembedding} and \ref{thm:mainsemistabilitycomparison} hold also for the truncation of each $\sigma^{(i)}$ in which all zero entries are ignored; cf.~the organisation of our setup in Section~\ref{sect:setup}. 

Let $\mathcal{Q}$ and $\underline{d}$ be as before, and consider $R =\mathrm{Rep}(\mathcal{Q},\underline{d})$ with the group action of $$G= \prod\nolimits_{j=1}^{j_0} \bigl(GL_k(P_j(n)) \times GL_k(P_j(m)) \bigr).$$

\begin{definition}[The master space]
Define
$$ Y := \bigcup\nolimits_{\sigma \in \Sigma} \overline{Q^{[\sigma\text{-ss}]}}\subset R, $$
where  $\overline{Q^{[\sigma\text{-ss}]}}$ denotes the scheme-theoretic closure of $Q^{[\sigma\text{-ss}]}$ in $R$.
\end{definition}
The following is our main result.
\begin{theorem}[A master space for moduli of multi-Gieseker semistable sheaves]\label{thm:master}
The affine scheme $Y$ is a master space for the moduli spaces $\mathcal M_{\sigma}$ as $\sigma$ varies in $\Sigma$.  More precisely, for each $\sigma \in \Sigma$, we have
$$ \mathcal M_{\sigma} = Y \hq _{\theta_\sigma} G := Y^{\sigma\text{-ss}} \hq G.$$
In particular, any two such moduli spaces are related by a finite number of Thaddeus-flips.
\end{theorem}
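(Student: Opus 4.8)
The plan is to derive the theorem from Corollary~\ref{cor:obtaining}, which already furnishes a description of $\mathcal{M}_\sigma$ as a GIT quotient of a $G$-space sitting \emph{inside the fixed representation space $R$ attached to the full quiver $\mathcal{Q}$} --- valid for degenerate $\sigma$ as well, this being the whole point of Section~\ref{sec:degenerate}. Note first that $Y$, being a finite union of scheme-theoretic closures inside the affine space $R$, is closed in $R$, hence affine. Now fix $\sigma\in\Sigma$, write $Z:=\overline{Q^{[\sigma\text{-ss}]}}$ and $Z^{\sigma\text{-ss}}:=Z\cap R^{\sigma\text{-ss}}$, and recall from Corollary~\ref{cor:obtaining} that $\mathcal{M}_\sigma\cong Z^{\sigma\text{-ss}}\hq G$. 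Since $Z$ is one of the terms of the union defining $Y$, it is a $G$-invariant closed subscheme of $Y$, so $Z^{\sigma\text{-ss}}\subseteq Y^{\sigma\text{-ss}}:=Y\cap R^{\sigma\text{-ss}}$ is a $G$-invariant closed subscheme, and --- viewing both inside the good quotient $R^{\sigma\text{-ss}}\hq G$ of semistable $A$-modules constructed by King --- we obtain a closed embedding $\mathcal{M}_\sigma\cong Z^{\sigma\text{-ss}}\hq G\hookrightarrow Y^{\sigma\text{-ss}}\hq G$. Everything therefore reduces to the equality $Y^{\sigma\text{-ss}}=Z^{\sigma\text{-ss}}$ as subschemes of $R^{\sigma\text{-ss}}$, and only the inclusion $Y^{\sigma\text{-ss}}\subseteq Z^{\sigma\text{-ss}}$ is in question.

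Since $Y^{\sigma\text{-ss}}=\bigcup_{\sigma'\in\Sigma}\bigl(\overline{Q^{[\sigma'\text{-ss}]}}\cap R^{\sigma\text{-ss}}\bigr)$, it suffices to prove the \emph{key inclusion}
\[
\overline{Q^{[\sigma'\text{-ss}]}}\cap R^{\sigma\text{-ss}}\ \subseteq\ Q^{[\sigma\text{-ss}]}\qquad\text{for every }\sigma'\in\Sigma .
\]
Granting this, the case $\sigma'=\sigma$ gives $Z^{\sigma\text{-ss}}=\overline{Q^{[\sigma\text{-ss}]}}\cap R^{\sigma\text{-ss}}\subseteq Q^{[\sigma\text{-ss}]}$, while conversely $Q^{[\sigma\text{-ss}]}\subseteq R^{\sigma\text{-ss}}$ by Theorem~\ref{thm:mainsemistabilitycomparison}(1) and $Q^{[\sigma\text{-ss}]}\subseteq Z$ by construction, so $Z^{\sigma\text{-ss}}=Q^{[\sigma\text{-ss}]}$; the key inclusion then yields $Y^{\sigma\text{-ss}}\subseteq Q^{[\sigma\text{-ss}]}=Z^{\sigma\text{-ss}}\subseteq Y^{\sigma\text{-ss}}$, and a routine identification of scheme structures --- using that all spaces in sight are governed by the corepresented moduli functors, exactly as in \cite{GRTI} --- upgrades this to an equality of schemes.

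The key inclusion is an instance of the standard GIT principle that semistable points in the closure of the Quot-type locus come from semistable sheaves, and it is here that the substance of the argument lies. Given $M\in\overline{Q^{[\sigma'\text{-ss}]}}$ that is $\sigma$-semistable as an $A$-module, one realises $M$ as a limit: by properness of the relevant Quot scheme there is a smooth curve $C$, a point $0\in C$, and a flat family $\widetilde{\mathcal E}$ of sheaves over $C$ with $\widetilde{\mathcal E}|_{C\setminus\{0\}}$ a family of $\sigma'$-semistable sheaves of topological type $\tau$ whose associated $A$-modules trace out a curve in $R$ with limit $M$. Under the standing boundedness hypothesis the set of all such flat limits is again bounded, so --- after enlarging $m\gg n\gg p\gg 0$ so that the conclusions of Theorems~\ref{thm:categoryembedding} and~\ref{thm:mainsemistabilitycomparison} also apply to this larger family --- we may assume every fibre of $\widetilde{\mathcal E}$ is $(p,\underline{L})$-regular; then $\sheafHom_X(T,\widetilde{\mathcal E})$ is a flat family of $A$-modules over $C$ in the sense of \cite[Proposition~5.8]{GRTI}, its fibre over $0$ is $\Hom(T,E)$ with $E:=\widetilde{\mathcal E}_0$, and comparing the two families over $C\setminus\{0\}$ identifies $M\cong\Hom(T,E)$. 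It remains to see that $E$ is pure and $\sigma$-semistable: a flat limit of pure sheaves need not be pure, but a torsion or lower-dimensional subsheaf of $E$ would produce a proper subrepresentation of $M=\Hom(T,E)$ with strictly positive $\theta_\sigma$-value (the lower-order terms of its multi-Hilbert polynomial dominate, as made precise in \cite{GRTI}), contradicting $\sigma$-semistability of $M$; so $E$ is pure, and $E$ is then $\sigma$-semistable by Theorem~\ref{thm:mainsemistabilitycomparison}(1) applied to the pure $(p,\underline{L})$-regular sheaf $E$ with $\Hom(T,E)$ $\sigma$-semistable. Hence $M\in Q^{[\sigma\text{-ss}]}$. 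I expect this step --- controlling the boundary of the Quot-type locus, in particular ruling out $\sigma$-semistable non-sheaf or non-pure limits, \emph{uniformly over the finite set $\Sigma$} --- to be the main obstacle; it proceeds exactly as in the proof of Theorem~\ref{thm:moduliexist}, i.e.\ of \cite[Theorems~9.4 and~10.1]{GRTI}, the only extra care being that $p,n,m$ are chosen once and for all so that everything holds simultaneously for all $\sigma\in\Sigma$ and their truncations (cf.~the organisation of Section~\ref{sect:setup}).

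Finally, the last assertion is formal: each $\mathcal{M}_\sigma$ with $\sigma\in\Sigma$ has been realised as the GIT quotient $Y\hq_{\theta_\sigma}G$ of the \emph{single} fixed affine scheme $Y$ by the \emph{single} fixed reductive group $G$, with respect to the rational character $\theta_\sigma$. This places the collection $\{\mathcal{M}_\sigma\}_{\sigma\in\Sigma}$ within the Variation-of-GIT framework of Thaddeus~\cite{Thaddeus} and Dolgachev--Hu~\cite{DolgachevHu}: joining $\theta_{\sigma^{(1)}}$ to $\theta_{\sigma^{(2)}}$ by a path in the space of rational $G$-characters on $Y$ crosses only finitely many walls, and each wall-crossing is a Thaddeus-flip, so any two of the moduli spaces $\mathcal{M}_\sigma$ are related by a finite number of Thaddeus-flips.
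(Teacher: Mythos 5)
Your overall skeleton matches the paper's: $Y$ is affine, $Z^{\sigma\text{-ss}}\subset Y^{\sigma\text{-ss}}$ is immediate, everything reduces to $Y^{\sigma\text{-ss}}=Z^{\sigma\text{-ss}}$, and the conclusion then follows from Corollary~\ref{cor:obtaining} and VGIT. The gap is your ``key inclusion'' $\overline{Q^{[\sigma'\text{-ss}]}}\cap R^{\sigma\text{-ss}}\subseteq Q^{[\sigma\text{-ss}]}$: it is strictly stronger than what is needed, and for $\sigma'=\sigma$ it asserts $Y^{\sigma\text{-ss}}=Z^{\sigma\text{-ss}}=Q^{[\sigma\text{-ss}]}$, which is exactly what the paper explicitly refrains from claiming for degenerate $\sigma$ (see the Remark following Theorem~\ref{thm:master}) --- and degenerate parameters are the raison d'\^etre of $\Sigma$. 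In fact the inclusion fails: act on a point of $Q^{[\sigma\text{-ss}]}$ by the one-parameter subgroup of $G''$ scaling the spaces $W_j$ with $\sigma_j=0$; the limit lies in $Z=\overline{Q^{[\sigma\text{-ss}]}}$, has zero maps into those $W_j$, and is still $\theta_\sigma$-semistable because $\theta_{j1}=\theta_{j2}=0$ for those rows, yet it is not isomorphic to $\Hom(T,E)$ for any $(n,\underline{L})$-regular sheaf $E$, so it lies in $Z^{\sigma\text{-ss}}\setminus Q^{[\sigma\text{-ss}]}$. Your limiting argument cannot repair this: the integers $p,n,m$ are fixed before $\underline{d}$, $R$ and $Q$ are even defined, so one cannot ``enlarge $m\gg n\gg p$'' after choosing a degenerating family; the flat limits along arbitrary curves are not covered by the standing boundedness hypothesis; and without regularity of the limit sheaf there is no reason for the limit in $R$ of the modules $\Hom(T,\widetilde{\mathcal{E}}_t)$ to coincide with $\Hom(T,\widetilde{\mathcal{E}}_0)$.

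What is actually needed, and what the paper proves, is weaker: the inclusion $Q^{[\sigma'\text{-ss}]}\cap R^{\sigma\text{-ss}}\subset Q^{[\sigma\text{-ss}]}$ at the level of the \emph{open} loci, established functorially and with no limits --- the tautological family over $Q^{[\sigma'\text{-ss}]}$ consists of $(p,\underline{L})$-regular sheaves of type $\tau$; over $R^{\sigma\text{-ss}}$ their associated modules are $\sigma$-semistable, hence the sheaves themselves are $\sigma$-semistable by Theorem~\ref{thm:mainsemistabilitycomparison}(1), and Proposition~\ref{prop:identifyingtheimage}(b) then factors the inclusion through $Q^{[\sigma\text{-ss}]}$. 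After discarding the components of $Q^{[\sigma'\text{-ss}]}$ that miss $R^{\sigma\text{-ss}}$, so that $Q^{[\sigma'\text{-ss}]}\cap R^{\sigma\text{-ss}}$ is dense in $Q^{[\sigma'\text{-ss}]}$, one deduces only $\overline{Q^{[\sigma'\text{-ss}]}}\cap R^{\sigma\text{-ss}}\subset Z^{\sigma\text{-ss}}$, i.e.\ $Y^{\sigma\text{-ss}}=Z^{\sigma\text{-ss}}$, never $=Q^{[\sigma\text{-ss}]}$. The entire burden of the degenerate case is carried by Corollary~\ref{cor:obtaining} (via Proposition~\ref{prop:maintechinical}), which you invoke correctly; if you replace your key inclusion and its Quot-scheme limit argument by this closure/density argument, the rest of your proof stands, including the final VGIT step.
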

\begin{proof} 
Fix $\sigma\in \Sigma$.  Given the work from the previous section, the only issue is in gaining control over those components of $Y$ that do not lie in $\overline{Q^{[\sigma\text{-ss}]}}$.  To this end, let $Z := \overline{Q^{[\sigma\text{-ss}}]}$.
We claim that
\begin{equation}
Y^{\sigma\text{-ss}} = Z^{\sigma\text{-ss}}.\label{eq:masterclaim}\end{equation}
Clearly, we have a scheme-theoretic inclusion $Z^{\sigma\text{-ss}}\subset Y^{\sigma\text{-ss}}$, since $Z\subset Y$ is a closed embedding.

To prove the other inclusion, it is enough to check that for any $\sigma'\in\Sigma$ we have a scheme-theoretic inclusion 
\begin{equation}\label{eq:incl1}
 \overline{Q^{[\sigma'\text{-ss}]}}\cap R^{\sigma\text{-ss}} \subset Z^{\sigma\text{-ss}}.
\end{equation}
Observe first that  by our choice of integers $m$, $n$, and $p$, Theorem \ref{thm:mainsemistabilitycomparison}(1) implies $Q^{[\sigma\text{-ss}]} \subset R^{\sigma\text{-ss}}$.
We claim that \eqref{eq:incl1} is implied by the inclusion
\begin{equation}
Q^{[\sigma'\text{-ss}]}\cap R^{\sigma\text{-ss}}\subset Q^{[\sigma\text{-ss}]}.\label{eq:secondmasterclaim}\end{equation}
To see this, observe that \eqref{eq:incl1} is not affected by any component of $Q^{[\sigma'\text{-ss}]}$ that does not meet the Zariski open set $R^{\sigma\text{-ss}}$, and thus we may assume that $Q^{[\sigma'\text{-ss}]}\cap R^{\sigma\text{-ss}}$ is Zariski dense in $Q^{[\sigma'\text{-ss}]}$, at which point  \eqref{eq:incl1}  follows formally from \eqref{eq:secondmasterclaim}.

Now, the definition of $Q^{[\sigma'\text{-ss}]}$ and Proposition \ref{prop:identifyingtheimage}(a) show that $Q^{[\sigma'\text{-ss}]}$ is the base of a flat family of $\sigma'$-semistable $(p, \underline{L})$-regular sheaves of topological type $\tau$ on $X$. Each sheaf in this family that lies over a point of the intersection $Q^{[\sigma'\text{-ss}]} \cap R^{\sigma\text{-ss}}$ corresponds to a $\sigma$-semistable $A$-module and thus by Theorem \ref{thm:mainsemistabilitycomparison}(1) is $\sigma$-semistable as a sheaf on $X$. Therefore, Proposition \ref{prop:identifyingtheimage}(b) and the definition of $Q^{[\sigma\text{-ss}]}$ imply that the inclusion $Q^{[\sigma'\text{-ss}]} \cap R^{\sigma\text{-ss}}\to R^{\sigma\text{-ss}}$ factors through  $Q^{[\sigma\text{-ss}]}$, which proves \eqref{eq:secondmasterclaim} and as consequence also \eqref{eq:incl1} and	 \eqref{eq:masterclaim}.

Finally, from \eqref{eq:masterclaim} and Corollary \ref{cor:obtaining} we obtain 
$$ Y \hq_{\theta_{\sigma}} G \cong Y^{\sigma\text{-ss}}\hq 	G \cong Z^{\sigma\text{-ss}}\hq G \cong \mathcal M_{\sigma},$$
which concludes the proof of Theorem~\ref{thm:master}.
\end{proof}

\begin{remark}[Sheaf-semistability vs.~quiver-semistability]
 Note that in contrast to \cite[Thm.~10.1]{GRTI} we do not claim that $Y^{\sigma\text{-ss}} = Q^{[\sigma\text{-ss}]}$. As the stability parameter $\sigma$ is degenerate, the embedding of $Q^{[\sigma\text{-ss}]}$ into $R^{\sigma\text{-ss}}$ is not saturated with respect to the GIT-quotient $R^{\sigma\text{-ss}} \to R^{\sigma\text{-ss}}\hq G$. This is intimately related to the fact that one needs the relevant stability parameter to be positive for Theorem~\ref{thm:mainsemistabilitycomparison}(2) to hold. 
\end{remark}

As discussed in the Introduction,  as a consequence of Theorem~\ref{thm:master} we obtain the desired VGIT statement about the relation between Gieseker-moduli spaces of semistable sheaves with respect to two choices of ample line bundles on a fixed base scheme $X$.

\begin{corollary}[Mumford-Thaddeus principle for Gieseker-moduli spaces]\label{cor:master}
Let $L_0$ and $L_1$ be ample line bundles on $X$.  Then, the moduli spaces $\mathcal M_{L_0}$ and $\mathcal M_{L_1}$ of sheaves of a given topological type that are Gieseker-semistable with respect to $L_0$ and $L_1$, respectively, are related by a finite number of Thaddeus-flips.
\end{corollary}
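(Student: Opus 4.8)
The plan is to read the corollary off Theorem~\ref{thm:master} by specialising to $j_0=2$ and the two standard basis vectors. One preliminary reduction is needed: Theorem~\ref{thm:master} is stated for very ample polarisations, whereas $L_0$ and $L_1$ are only ample, so I would first replace each $L_i$ by a high tensor power $L_i^{\otimes k}$ with $k\gg 0$ very ample. Gieseker-semistability with respect to $L_i$ depends only on the reduced Hilbert polynomial $m\mapsto \chi(E\otimes L_i^{m})/(\text{leading coefficient})$, and passing from $L_i$ to $L_i^{\otimes k}$ amounts to the substitution $m\mapsto km$ up to a positive scalar; since a polynomial inequality holding for all large $m$ in the progression $k\mathbb{Z}$ holds for all large $m$, the semistable sheaves, their Jordan--H\"older filtrations, and hence the $S$-equivalence classes are unchanged, so $\mathcal M_{L_i}=\mathcal M_{L_i^{\otimes k}}$. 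Thus I may assume $L_0$ and $L_1$ are very ample.

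Next I would take $\underline L:=(L_0,L_1)$, so $j_0=2$, and $\Sigma:=\{\underline{e}_1,\underline{e}_2\}\subset\mathbb{Q}_{\ge 0}^2\setminus\{0\}$ with $\underline{e}_1=(1,0)$, $\underline{e}_2=(0,1)$; note that $\underline{e}_1$ and $\underline{e}_2$ are degenerate, which is exactly the case the present paper adds to \cite{GRTI}. As remarked right after the definition of multi-Gieseker-stability, a sheaf is $\underline{e}_1$- (resp.\ $\underline{e}_2$-) semistable if and only if it is Gieseker-semistable with respect to $L_0$ (resp.\ $L_1$), with matching notion of $S$-equivalence, so $\mathcal M_{\underline{e}_1}=\mathcal M_{L_0}$ and $\mathcal M_{\underline{e}_2}=\mathcal M_{L_1}$ as moduli spaces. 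I would then check the standing boundedness hypothesis for this $\Sigma$: the set of sheaves of topological type $\tau$ semistable with respect to some $\sigma\in\Sigma$ is the union of the family of $L_0$-Gieseker-semistable sheaves of type $\tau$ and that of $L_1$-Gieseker-semistable sheaves of type $\tau$, each bounded by the classical boundedness theorem for Gieseker-semistable sheaves (see \cite{Bible}); a finite union of bounded families is bounded. (Alternatively, since $\Sigma$ is finite and hence has compact image, this follows from \cite[Theorem~6.8]{GRTI}.)

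With this data Theorem~\ref{thm:master} produces a single affine scheme $Y$ carrying an action of one reductive group $G$, together with characters $\theta_{\underline{e}_1},\theta_{\underline{e}_2}$ of $G$, such that
$$\mathcal M_{L_0}=\mathcal M_{\underline{e}_1}=Y\hq_{\theta_{\underline{e}_1}}G\qquad\text{and}\qquad\mathcal M_{L_1}=\mathcal M_{\underline{e}_2}=Y\hq_{\theta_{\underline{e}_2}}G.$$
Thus $\mathcal M_{L_0}$ and $\mathcal M_{L_1}$ are realised as GIT quotients of one and the same $G$-scheme with respect to two linearisations, which is precisely the setting of Variation of GIT studied by Thaddeus \cite{Thaddeus} and Dolgachev--Hu \cite{DolgachevHu}. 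Joining $\theta_{\underline{e}_1}$ and $\theta_{\underline{e}_2}$ by a segment in the space of (fractional) characters of $G$ and using the finiteness of the resulting wall-and-chamber decomposition, one passes from $\mathcal M_{L_0}$ to $\mathcal M_{L_1}$ through finitely many Thaddeus-flips, which is the assertion of the corollary.

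I do not expect a real obstacle: all the substance sits inside Theorem~\ref{thm:master}. The only spots meriting a sentence of care are the (harmless) reduction to very ample $L_i$, the bookkeeping identifying $\mathcal M_{\underline{e}_i}$ with the classical Gieseker moduli space, and the observation that the affine, character-linearised flavour of GIT used here still fits the Thaddeus--Dolgachev--Hu VGIT formalism, which is usually phrased for projective varieties with an ample linearisation.
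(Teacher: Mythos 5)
Your proposal is correct and follows essentially the same route as the paper: reduce to the very ample case, apply Theorem~\ref{thm:master} with $j_0=2$, $\underline L=(L_0,L_1)$ and the two degenerate parameters $(1,0)$ and $(0,1)$, with boundedness supplied by the classical result for Gieseker-semistable sheaves. The extra details you spell out (invariance of semistability under replacing $L_i$ by $L_i^{\otimes k}$, and the VGIT wall-and-chamber finiteness, which is already packaged into the theorem's conclusion) are consistent with, and merely expand on, the paper's brief argument.
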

\begin{proof}
Without loss of generality we may assume $L_0$ and $L_1$ are very ample.  Then, apply the previous theorem to the stability parameters $(L_0,L_1,1,0)$ and $(L_0,L_1,0,1)$ for the quiver $\mathcal{Q}$ we obtain setting $j_0 = 2$. Note that the required boundedness requirements are fulfilled due to classical results about Gieseker-semistability, see for example \cite[Theorem~3.3.7]{Bible}. 
\end{proof}

\begin{remark}[VGIT for a finite number of moduli spaces]
 It is obvious from the above that we can discuss any number $j_0$ of ample polarisations and their induced Gieseker moduli spaces simultaneously using the quiver $\mathcal{Q}$ with the appropriate number $j_0$ of rows. 
\end{remark}

\section{Intermediate Spaces and Uniformity}\label{sec:uniform}

The master space construction above works for any finite collection of stability parameters.  However it has a deficiency in that it does not identify the intermediate spaces that appear in the Thaddeus-flips occuring between different moduli spaces.  This is discussed in detail in our previous work \cite{GRTIIpreprint}, the upshot being that given a family $(\sigma(t))_{t\in [0,1]}$ of stability parameters (all taken with respect to the same vector $\underline{L}$ of ample line bundles), one can only reasonably expect to be able to control these intermediate spaces if one makes some additional assumptions that we discuss next. For this section we assume that $X$ is smooth of dimension $d$ and all the sheaves in question are torsion-free.

Fix ample line bundles $L_j$ for $j=1,\ldots,j_0$ on X.  We extend the notion of multi-Gieseker stability to allow  twisting by a fixed collection of line bundles $B_j$, $j=1,\ldots,j_0$. Given these data, the \emph{multi-Hilbert polynomial}  with respect to $\sigma\in (\mathbb R_{\ge 0})^{j_0}\setminus\{0\}$ of a coherent sheaf $E$ is now taken to be 
   \begin{equation}\label{eq:defalpha:twisted}
     P_E^\sigma(m) := \sum\nolimits_{j} \sigma_j \chi(E\otimes L_j^m \otimes B_j).
   \end{equation}
This twisting adds no new difficulties (see \cite{GRTIIpreprint}).  We say a path $\sigma\colon [0,1] \to \mathbb R_{\ge 0}^{j_0}\setminus \{0\}$,  $\sigma(t) = (\sigma_1(t),\ldots,\sigma_{j_0}(t))$ is a \emph{stability segment} if each $\sigma_j$ is a linear function of $t$ and
 $$\sum\nolimits_j \vol(L_j) \sigma_j(t) = 1 \quad \text{ for all } t \in [0,1],$$
 where $\vol(L_j) : = \int_X c_1(L_j)^d$.  We say a stability
segment  $(\sigma(t))_{t\in [0,1]}$ is \emph{bounded} if the set of sheaves of a given topological type that are
semistable with respect to $\sigma(t)$ for some $t\in [0,1]$ is bounded.
 
\begin{definition}[Uniform stability]
We say a stability segment $(\sigma(t))_{t\in [0,1]}$  is \emph{uniform} if for every torsion-free sheaf $E$ and every $t\in [0,1]$ we have
$$  \frac{\sum_{j}(t) \sigma_j \chi(E\otimes L_{j}^{k}\otimes B_{j})}{\rank(E)}=  \frac{k^d}{d!} + a_{d-1}(E) k^{d-1} + \cdots + a_1(E)k + a_0(E,t),$$
where  $a_{d-1}(E),\ldots,a_{1}(E)$ are independent of $t$ and $a_0(E,t)$ is linear in $t$.
\end{definition}

Key to the notion of uniform stability segment is the following \emph{semicontinuity property}:  if $E$ is a sheaf that is semistable with respect to $\sigma(t)$ for all $t<\overline{t}$ then it is also semistable with respect to $\sigma(\overline{t})$ (this is easy to see, and discussed further in \cite[Remark 2.5]{GRTIIpreprint}).   

\begin{theorem}[Thaddeus-flips through moduli spaces of sheaves] \label{thm:flipsthoughmoduli}
Let $X$ be smooth and projective, let $\tau$ be a topological type and $(\sigma(t))_{t\in [0,1]}$ be a bounded uniform stability segment.   For $t\in [0,1]$ let $\mathcal M_{\sigma(t)}$ be the moduli space of torsion-free sheaves on $X$ of topological type $\tau$ that are $\sigma(t)$-semistable. 

Then given any $t',t''$ in $[0,1]$ the moduli spaces $\mathcal M_{\sigma(t')}$ and $\mathcal M_{\sigma(t'')}$ are connected by a finite collection of Thaddeus-flips of the form
\[\begin{xymatrix}{
 \mathcal{M}_{\sigma(t_i)} \ar[rd] &  & \mathcal{M}_{\sigma(t_{i+1})} \ar[ld] \\
                 &        \mathcal{M}_{\sigma(t'_i)}.               & }
\end{xymatrix}
\]
for some $t_i,t_i'\in [0,1]$. 
\end{theorem}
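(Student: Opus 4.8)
The plan is to reduce the statement to the master-space construction of Theorem~\ref{thm:master}, applied along the stability segment $(\sigma(t))_{t\in[0,1]}$, together with the VGIT machinery of Thaddeus and Dolgachev--Hu. First I would note that, by compactness of $[0,1]$ and the semicontinuity property of a uniform stability segment recalled above, the wall-and-chamber structure along the segment is \emph{finite}: there are only finitely many values $t$ at which the set of $\sigma(t)$-semistable sheaves of type $\tau$ (equivalently, the corresponding $A$-modules) changes, because $\theta_{\sigma(t)}$ depends linearly (after clearing denominators, polynomially) on $t$ and the possible dimension vectors of destabilising submodules are bounded by the boundedness hypothesis. Pick rational parameters $t_0 = t' < t_1 < \dots < t_N = t''$ (reordering $t',t''$ if necessary) so that each open interval $(t_i,t_{i+1})$ lies in a single chamber, and let $\Sigma \subset \mathbb{Q}^{j_0}_{\ge 0}\setminus\{0\}$ consist of the (rationalised) parameters $\sigma(t_i)$ for $i=0,\dots,N$ together with one rational interior parameter $\sigma(s_i)$ for each $s_i \in (t_i,t_{i+1})$. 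Since the whole segment is bounded, the family of sheaves semistable with respect to some $\sigma\in\Sigma$ is bounded, so Theorem~\ref{thm:master} produces a single affine $G$-scheme $Y$ with $\mathcal M_{\sigma} = Y\hq_{\theta_\sigma} G$ for every $\sigma\in\Sigma$.

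Next I would invoke the VGIT results for variation of the linearisation on the fixed scheme $Y$ under the $G$-action: as the character $\theta_{\sigma(t)}$ varies linearly in $t$, Thaddeus' and Dolgachev--Hu's theory shows that two adjacent GIT quotients $Y\hq_{\theta_{\sigma(t_i)}}G$ and $Y\hq_{\theta_{\sigma(t_{i+1})}}G$ are related by a single Thaddeus-flip through the quotient $Y\hq_{\theta_{\sigma(s_i)}}G$ taken at an interior parameter, with the two flip maps being the natural morphisms induced by the inclusions of semistable loci $Y^{\sigma(s_i)\text{-ss}}\subset Y^{\sigma(t_i)\text{-ss}}$ and $Y^{\sigma(s_i)\text{-ss}}\subset Y^{\sigma(t_{i+1})\text{-ss}}$ (here one uses that on a fixed chamber the semistable locus is constant, so crossing a single wall inserts at most one flip). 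Identifying these GIT quotients with the moduli spaces via Theorem~\ref{thm:master} gives, for each $i$, the asserted diagram with $t_i' := s_i$, and concatenating over $i=0,\dots,N-1$ connects $\mathcal M_{\sigma(t')}$ to $\mathcal M_{\sigma(t'')}$ by finitely many such flips.

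The main obstacle I anticipate is the finiteness of the wall structure along the segment, i.e.\ verifying that only finitely many parameters $\sigma(t)$ along $[0,1]$ are needed in $\Sigma$. The subtlety is that walls for multi-Gieseker stability come from inequalities $\theta_{\sigma(t)}(M')\le 0$ for destabilising submodules $M'$, and a priori one must rule out accumulation of walls; this is exactly where the \emph{uniform} hypothesis does the work, since it forces the relevant comparison of reduced multi-Hilbert polynomials to reduce to the single linear coefficient $a_0(E,t)$, so that each potential wall is cut out by a single linear equation in $t$ and the boundedness of the family bounds the finitely many such equations that can occur. I would also need to be slightly careful that the intermediate parameters $\sigma(s_i)$ can be chosen rational (possible since chambers are open and defined by rational inequalities) so that Theorem~\ref{thm:master}, which is stated for rational $\Sigma$, applies directly; the twisting by the fixed line bundles $B_j$ introduces no new difficulty, as remarked in the text. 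A secondary, more routine point is checking that the morphisms appearing in the VGIT flip diagram are precisely the birational morphisms of moduli spaces one expects — this follows from functoriality of the good-quotient construction and the identification of moduli functors, but should be spelled out by reference to \cite{Thaddeus, DolgachevHu} and \cite{GRTIIpreprint}.
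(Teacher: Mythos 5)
Your overall strategy (reduce to Theorem~\ref{thm:master} for a finite set $\Sigma$ of parameters along the segment, then run VGIT on the fixed master space $Y$) is the right picture, but it has a genuine gap exactly at what the paper's proof calls the crucial point: the \emph{uniform} choice of the integers $m\gg n\gg p$ over all $t\in[0,1]$. In your argument the finite set $\Sigma$ consists of wall parameters and chamber representatives, but the wall-and-chamber decomposition you need for the inclusions of semistable loci lives on $R=\mathrm{Rep}(\mathcal{Q},\underline{d})$ (equivalently on $Y$), and $\underline{d}=(P_j(n),P_j(m))_j$ — hence the walls themselves — depend on $m,n$. On the other hand, applying Theorem~\ref{thm:master} to $\Sigma$ requires $m\gg n\gg p$ chosen so that Theorems~\ref{thm:categoryembedding} and \ref{thm:mainsemistabilitycomparison}(1) hold for every $\sigma\in\Sigma$; if you enlarge $m,n,p$ to accommodate the parameters you just found, the representation space, the characters $\theta_{\sigma(t)}$ and the walls all change, and nothing guarantees this process stabilises. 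Your parenthetical ``the set of $\sigma(t)$-semistable sheaves (equivalently, the corresponding $A$-modules)'' quietly assumes the sheaf/module comparison at \emph{every} $t$ with one fixed $(m,n,p)$, which is precisely Theorem~\ref{thm:mainsemistabilitycomparison}(1) uniformly in $t$ — and this is where the uniformity hypothesis is really needed (not only, as you suggest, to make the sheaf-theoretic walls linear and finite). The paper's proof is in fact a reduction to \cite[Theorem~2.6]{GRTIIpreprint}: uniformity is invoked exactly to fix $m\gg n\gg p$ once for the whole segment, and the genuinely new ingredient of this paper is that the degenerate-parameter results of Sections~\ref{sec:degenerate}--\ref{sect:master_construction} allow the (possibly degenerate) endpoints $\sigma(0),\sigma(1)$ to be treated with the same master space; your appeal to Theorem~\ref{thm:master} does capture that last point implicitly, but the uniform choice of integers must be made explicit for your $\Sigma$ to be legitimately usable.

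Two secondary points. First, your flip diagrams are grouped the wrong way around: with $s_i$ interior to a chamber and $t_i,t_{i+1}$ at its ends, the inclusions $Y^{\sigma(s_i)\text{-ss}}\subset Y^{\sigma(t_i)\text{-ss}}$ induce morphisms \emph{out of} $Y\hq_{\theta_{\sigma(s_i)}}G$ into the two outer quotients, i.e.\ a roof rather than the valley $\mathcal M_{\sigma(t_i)}\to\mathcal M_{\sigma(t_i')}\leftarrow\mathcal M_{\sigma(t_{i+1})}$ asserted in the statement; the concatenated chain is the same, but you should regroup so that the wall quotients are the intermediate spaces. Second, rationality is not as easily dispensed with as you indicate: $t'$ and $t''$ are arbitrary, the values $\sigma(t)$ need not be rational even for rational $t$ (only the normalisation $\sum_j\vol(L_j)\sigma_j(t)=1$ is imposed), and the GIT walls on $R$ are cut out by conditions quadratic in $t$, so the parameters you must feed into Theorem~\ref{thm:master} (which requires $\Sigma\subset\mathbb Q_{\ge 0}^{j_0}\setminus\{0\}$) are not automatically rational; handling this requires the constancy of the semistable family within chambers together with the irrational-parameter results of \cite{GRTI}, as is done in \cite{GRTIIpreprint}, and should not be passed over with ``pick rational parameters $t_0=t'$''.
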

\begin{proof}
We remark that the only difference between the statements of Theorem~\ref{thm:flipsthoughmoduli} and \cite[Theorem 2.6]{GRTIIpreprint} is that here we allow the parameters $t'$ and $t''$ to be among the endpoints $t=0$ or $t=1$.  The crucial point is that the uniformity assumption on $(\sigma(t))_{t\in [0,1]}$ allows us to choose the integers $m\gg n\gg p$ appearing in the proof of Theorem \ref{thm:master} in such a way that they work uniformly over all $t\in [0,1]$.  The rest of the proof is then essentially the same as either Theorem \ref{thm:master} or \cite[Theorem 2.6]{GRTIIpreprint}, and so we do not repeat it here.  We merely observe that the newly established results of Section \ref{sec:degenerate} allow us to use the same master space up to the endpoints, although $\sigma(0)$ and $\sigma(1)$ may not be positive stability parameters.
\end{proof}

\vspace{1cm}


\begin{thebibliography}{MFK94}
\addtocontents{toc}{\protect\setcounter{tocdepth}{1}}



\bibitem[ACK07]{ConsulKing}  Luis \'Alvarez-C\'onsul and Alastair King, \emph{A functorial construction of moduli of sheaves}, Invent. Math. \textbf{168} (2007), no.~3, 613--666.

\bibitem[BM15]{BertramMartinez} Aaron Bertram and Cristian Martinez, \emph{Change of polarization for moduli of sheaves on surfaces as Bridgeland wall-crossing}, preprint \texttt{arXiv:1505.07091}, 2015.


\bibitem[DH98]{DolgachevHu} Igor Dolgachev and Yi Hu, \emph{Variation of geometric invariant theory quotients}, Inst. Hautes \'Etudes Sci. Publ. Math. \textbf{87} (1998), 5--56. 

\bibitem[Ful98]{F} William Fulton,  \emph{Intersection theory}, Springer-Verlag, Berlin, 1998.

\bibitem[GRT14]{GRTI}
Daniel Greb, Julius Ross, and Matei Toma,\emph{Variation of Gieseker moduli spaces via quiver GIT}, to appear in Geometry $\&$ Topology, preprint \texttt{arXiv:1409.7564}, 2014.

\bibitem[GRT15]{GRTIIpreprint} Daniel Greb, Julius Ross, and Matei Toma, \emph{Semi-continuity of stability for sheaves and variation of Gieseker moduli spaces}, to appear in Journal f\"ur die Reine und Angewandte Mathematik (Crelle), preprint \texttt{arXiv:1501.04440}, 2015.



 \bibitem[HL10]{Bible}  Daniel Huybrechts and Manfred Lehn, \emph{The geometry of moduli spaces of
   sheaves}, second ed., Cambridge Mathematical Library, Cambridge University Press, Cambridge, 2010.


\bibitem[Kin94]{King} Alastair King, \emph{Moduli of representations of finite-dimensional algebras}, Quart. J. Math.~(2) \textbf{45} (1994),  515--530.

\bibitem[MFK94]{MumfordGIT}
David Mumford, John Fogarty, and Frances Kirwan, \emph{Geometric {I}nvariant {T}heory}, Ergebnisse der Mathematik und ihrer Grenzgebiete (2), vol.~34, Springer-Verlag, Berlin, 1994.


\bibitem[MW97]{MatsukiWentworth}
Kenji Matsuki and Richard Wentworth, \emph{Mumford-{T}haddeus principle on the
  moduli space of vector bundles on an algebraic surface}, Internat. J. Math.
  \textbf{8} (1997), no.~1, 97--148.
  
\bibitem[Ram96]{Ramanathan} A. Ramanathan, \emph{Moduli for principal bundles over algebraic curves.~II}, Proc. Indian Acad. Sci. Math. Sci. \textbf{106} (1996), no. 4, 421--449.
  
\bibitem[Sch00]{Schmitt}
Alexander Schmitt, \emph{Walls for {G}ieseker semistability and the {M}umford-{T}haddeus principle for moduli spaces of sheaves over higher
  dimensional bases}, Comment. Math. Helv. \textbf{75} (2000), no.~2, 216--231.
  
\bibitem[Sch08]{SchmittBook} 
Alexander Schmitt, \emph{Geometric invariant theory and decorated principal bundles},  Zurich Lectures in Advanced Mathematics, European Math. Soc. Publishing House, Z\"urich, 2008. 
  
\bibitem[Sim94]{Simpson} Carlos Simpson, \emph{Moduli of representations of the fundamental group of a smooth projective variety.~I}, Inst. Hautes \'Etudes Sci. Publ. Math. \textbf{79} (1994), 47--129.

\bibitem[SP16]{stacks-project} The Stacks Projects Authors, \emph{Stacks Project}, \url{http://stacks.math.columbia.edu}, 2016.


\bibitem[Tha96]{Thaddeus} Michael Thaddeus, \emph{Geometric invariant theory and flips},
J. Amer. Math. Soc. \textbf{9} (1996), no. 3, 691--723. 

\end{thebibliography}
\end{document}